\providecommand{\U}[1]{\protect \rule{.1in}{.1in}}
\newtheorem{theorem}{Theorem}
\theoremstyle{plain}
\newtheorem{proposition}{Proposition}
\newtheorem{remark}{Remark}
\numberwithin{equation}{section}
\title [Neural Network Operators of Convolution Type]{ Approximation by  Neural Network  Operators of Convolution Type Activated by Deformed and Parametrized Half Hyperbolic Tangent Function }
\begin{document}
\author{Asiye Arif$^{1}$}
\author{Tuğba Yurdakadim$^{1}$}

\subjclass[2025]{41A17, 41A25, 41A35, 47A58}
\keywords{Half hyperbolic tangent function, convolution type, positive linear operators, rate of convergence, iterated approximation}
\maketitle
\noindent$^{1}$Bilecik Şeyh Edebali University, Department of Mathematics, Turkey\\
E-mail: arif\_asiya@mail.ru$^1$, tugba.yurdakadim@bilecik.edu.tr $^1$,\\[2ex]

\begin{abstract}
Here, we introduce three kinds of neural network operators of convolution type which are activated by $q$-deformed and $\beta$-parametrized half hyperbolic tangent function. We obtain quantitative convergence results to the identity operator with the use of modulus of continuity. Global  smoothness preservation of our operators are also presented and the iterated versions of them are taken into the consideration. 
\end{abstract}

\section{Introduction and Motivation}
The main idea behind neural networks and artificial neural networks is to act like human brain by understanding the biological nature and function of it. Neural and artificial neural networks lay in the centre of machine learning and the studies on artificial intelligence that can learn and solve problems go back to   1950's \cite{rozenbaltt}. Recently, the power of computers, accessing the big data, studies on neural network, machine learning and deep learning gain speed and take noteworthy attention since they give us opportunity to handle complex, nonlinear  relations in big data, by making them suitable for the duties such as image recognition, natural language processing, optimization, process control system, forecasting, approximation of functions and solutions of curve fitting \cite{Arif}, \cite{Ismailaslan},  \cite{ALADAg},  \cite{ARMSTRONG}, \cite{amatod}, \cite{BISHOP}, \cite{Cardaliaguet}, \cite{CHUNG},  \cite{cIFTER},  \cite{FAALSIDE},   \cite{PARKER}.\\
\indent Quantitative approximation of positive linear operators to the unit operator has been studied by G. A. Anastassiou since 1985 \cite{anast 1985}, \cite{anast 1993}, \cite{anast 2001}, \cite{anast 2025}. By originating from the quantitative weak convergence of finite positive measures to the unit Dirac measure, having as a method the geometric moment theory \cite{anast 1993}, he has obtained best upper bounds and these studies have been considered from all possible perspectives, univariate and multivariate cases by many authors. In the present study, we introduce three kinds of convolution operators with the kernel depending on symmetrized, $q$-deformed and $\beta$-parametrized half hyperbolic tangent function. It is  noteworthy to mention that this activation function is frequently used in neural networks and they can be interpreted as positive linear operators, thus we use the methods of positive linear operator theory in our proofs. We present  quantitative convergence results  to the identity operator by using modulus of continuity  and global smoothness preservation of our operators are also obtained. Furthermore, iterated versions of these operators are taken into consideration. \\
\indent The outline of the paper is as follows: In  section 1, we explain the history of neural network theory  and our motivation. Section 2 is devoted to preliminaries. In  section 3, we present convergence results to the unit by our operators. In  section 4, the iterated versions of these operators are considered. We find it is valuable to mention that general motivation comes from seminal works \cite{G.A.2025b}, \cite{Bojanic}, \cite{Jung}, \cite{Moldovan2}, \cite{Moldovan}, \cite{Kamzolov}, \cite{Swetits}.

\section{ Preliminaries}
Let us recall the following deformed and parametrized half hyperbolic tangent function and its basic properties as below
\cite{Anastassiou20232}:
\begin{equation}
\nu_{q}(x)=\frac{1-qe^{-\beta x}}{1+qe^{-\beta  x}},~ x \in \mathbb{R},~ q,~\beta >0,
\end{equation}
\begin{itemize}
\item[•] $\nu_{q }(+\infty)=1, ~~\nu_{q,\beta }(-\infty)=-1,$
\item[•] $\nu_{q }(-x)=-\nu_{\frac{1}{q}}(x),~ for~every~ x \in (-\infty, \infty),$
\item[•] $ \nu_{q}(0)=\frac{1-q}{1+q}.$
\end{itemize}
Then with the use of $\nu$, the following activation function  has been considered  and its properties have been given in \cite{Anastassiou20232}:
\begin{equation}
G_{q,\beta }(x)=\frac{1}{4}\left(\nu_{q }(x+1)-\nu_{q }(x-1)\right),~x \in (-\infty, \infty).
\end{equation}
One can easily notice that 
\begin{align*}
G_{q, \beta }(-x)=&
%\frac{1}{4}\left(\nu_{q }(-x+1)-\nu_{q }(-x-1)\right)\\
%=& \frac{1}{4}\left( \frac{1-qe^{-\beta (-x+1)}}{1+qe^{-\beta (-x+1) }} -\frac{1-qe^{-\beta (-x-1)}}{1+qe^{-\beta  (-x-1)}}%\right)\\
%=&\frac{1}{4}\left( -\left(\frac{1-qe^{\beta (x+1)}}{1+qe^{\beta (x+1) }}\right) +\left(\frac{1-qe^{\beta (x-1)}}%{1+qe^{\beta  (x-1)}}\right) \right)\\
%=&\frac{1}{4}\left( -\left(\frac{\frac{1}{q}e^{-\beta (x+1)}-1}{\frac{1}{q}e^{-\beta (x+1) }+1}\right) +\left(\frac{\frac{1}{q}e^{-\beta (x-1)}-1}{\frac{1}{q}e^{-\beta  (x-1)}+1}\right) \right)\\
%=&\frac{1}{4}\left( \left(\frac{1-\frac{1}{q}e^{-\beta (x+1)}}{1+\frac{1}{q}e^{-\beta (x+1) }}\right) +\left(\frac{1-\frac{1}{q}e^{-\beta (x-1)}}{1+\frac{1}{q}e^{-\beta  (x-1)}}\right) \right)\\
\frac{1}{4}\left(\nu_{\frac{1}{q},\beta }(x+1)-\nu_{\frac{1}{q}, \beta }(x-1)\right) =G_{ \frac{1}{q}, \beta }(x), 
\end{align*}

\begin{equation}
\label{symmetry}
G_{q,\beta }(-x)=G_{ \frac{1}{q}, \beta }(x), 
\end{equation} 
which means that we have a deformed symmetry and the global maximum of $G_{q,\beta}(x)$ is
\begin{equation*}
G_{q,  \beta }\left( \frac{\ln q}{\beta} \right)= \frac{1-e^{-\beta }}{2(1+e^{ -\beta })}.
\end{equation*}
\noindent It is known from \cite{Anastassiou20232} that  $ G_{q, \beta }$ is a density function on $(-\infty, \infty)$ since
\begin{equation*}
\int_{-\infty}^{\infty} G_{q, \beta }(x)dx=1.
\end{equation*}

By observing the following symmetry 
\begin{equation*}
\left( G_{q,\beta } + G_{ \frac{1}{q}, \beta } \right)(-x)=\left( G_{q, \beta } + G_{ \frac{1}{q}, \beta } \right)(x),
\end{equation*}
we introduce 
\begin{equation*}
\Psi= \frac{ G_{q, \beta }+ G_{ \frac{1}{q}, \beta }}{2}.
\end{equation*}
Since our aim is to get a symmetrized density function, this definition of $\Psi$ serves to our aim, i.e.,
 \begin{equation*}
 \Psi(x)=\Psi(-x),~ x \in (-\infty, \infty)
 \end{equation*}
 and
\begin{equation*}
\int_{-\infty}^{ \infty} \Psi(x)dx=1, 
\end{equation*}
implying
\begin{equation}
\int_{-\infty}^{\infty} \Psi(nx-v)dv=1, for~every~ n \in \mathbb{N},~ x \in (-\infty, \infty).
\end{equation}

\section{Main Results}
Here, we introduce the following convolution type operators activated by symmetrized, $ q $-deformed and parametrized half hyperbolic tangent function. Our aim in this section is to examine their approximation properties and improve these results. \\
\indent Now by considering the above new density function $\Psi $, we introduce the following neural network operators of convolution type for $f\in C_{B}(\mathbb{R})$ consisting of the functions which are continuous and bounded on $\mathbb{R}$ and $n\in \mathbb{N}$ :
\begin{equation}
\label{3.1}
\mathtt{B}_n (f)(x):=\int_{-\infty}^{ \infty} f \left( \frac{v}{n}\right) \Psi(nx-v) dv,
\end{equation}
\begin{align}
\label{3.2}
\mathtt{B}_n^*(f)(x):&= n \int_{-\infty}^{ \infty} \left( \int^{\frac{v+1}{n}}_{\frac{v}{n}} f \left( h \right)dh\right)\Psi(nx-v) dv\\
\nonumber &=n \int_{-\infty}^{\infty} \left( \int^{\frac{1}{n}}_0 f \left( h+\frac{v}{n}\right)dh\right)\Psi(nx-v) dv,
\end{align}
\begin{equation}
\label{3.3}
 \overline{\mathtt{B}_n}(f)(x):=\int_{-\infty}^{ \infty} \left( \sum_{s=1}^r w_s f \left( \frac{v}{n}+\frac{s}{nr}\right)\right) \Psi(nx-v) dv,
\end{equation}
where $w_s \geq 0$, $\displaystyle \sum_{s=1}^r w_s=1$ and 
(\ref{3.2}) is  called  activated Kantorovich type of (\ref{3.1}),
(\ref{3.3}) is  called  activated  Quadrature type of (\ref{3.1}).\\
Here is our first result. 
\begin{theorem}
\label{t1}
\begin{equation}
\int_{\{v\in\mathbb{R}:|nx-v|\geq n^{1-\alpha}\}}\Psi(nx-v) dv< \frac{\left( q+\frac{1}{q} \right) }{e^{ \beta (n^{1-\alpha}-1)}}
\end{equation} holds for  $0<\alpha<1$, $n\in \mathbb{N}$ such that $n^{1-\alpha}>2$. 
\end{theorem}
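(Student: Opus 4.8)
The plan is to reduce the stated tail estimate to a pointwise exponential bound on $\Psi$ and then integrate. I would begin with the change of variables $t=nx-v$ (so $dv=-dt$), which rewrites the left-hand side as $\int_{\{|t|\geq n^{1-\alpha}\}}\Psi(t)\,dt$. Since $\Psi$ was constructed to be even, this equals $2\int_{n^{1-\alpha}}^{\infty}\Psi(t)\,dt$, and the whole problem is reduced to controlling $\Psi$ on $[n^{1-\alpha},\infty)$.

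Next I would produce an explicit majorant for $G_{q,\beta}$. Writing $\nu_q(y)=1-\frac{2qe^{-\beta y}}{1+qe^{-\beta y}}$ and putting $a=qe^{-\beta x}>0$, a short computation gives $\nu_q(x+1)-\nu_q(x-1)=\frac{2a(e^{\beta}-e^{-\beta})}{(1+ae^{\beta})(1+ae^{-\beta})}$, hence $G_{q,\beta}(x)=\frac{a(e^{\beta}-e^{-\beta})}{2(1+ae^{\beta})(1+ae^{-\beta})}$. Because $(1+ae^{\beta})(1+ae^{-\beta})=1+a(e^{\beta}+e^{-\beta})+a^{2}>1$, we get $G_{q,\beta}(x)<\tfrac{1}{2}(e^{\beta}-e^{-\beta})\,qe^{-\beta x}$, and similarly $G_{1/q,\beta}(x)<\tfrac{1}{2}(e^{\beta}-e^{-\beta})\,q^{-1}e^{-\beta x}$. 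Averaging, $\Psi(x)<\tfrac{1}{4}(e^{\beta}-e^{-\beta})\bigl(q+q^{-1}\bigr)e^{-\beta x}$ for all $x$.

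Finally I would integrate this majorant: $2\int_{n^{1-\alpha}}^{\infty}\Psi(t)\,dt<\tfrac{1}{2}(e^{\beta}-e^{-\beta})\bigl(q+q^{-1}\bigr)\int_{n^{1-\alpha}}^{\infty}e^{-\beta t}\,dt=\frac{(e^{\beta}-e^{-\beta})(q+q^{-1})}{2\beta}\,e^{-\beta n^{1-\alpha}}$, and then compare with the target $\frac{q+q^{-1}}{e^{\beta(n^{1-\alpha}-1)}}=(q+q^{-1})\,e^{\beta}\,e^{-\beta n^{1-\alpha}}$. The remaining inequality $\frac{e^{\beta}-e^{-\beta}}{2\beta}<e^{\beta}$ is equivalent to $1-e^{-2\beta}<2\beta$, which holds for every $\beta>0$ since $1-e^{-u}<u$ for $u>0$; combined with the strict inequality already obtained from discarding the positive terms in the denominator, this yields the claimed strict bound.

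Everything here is elementary, so there is no real obstacle, only one point that needs care: in the last step one must keep $e^{\beta}-e^{-\beta}$ intact rather than bounding it by $e^{\beta}$ before integrating. A premature bound leaves a stray factor $\tfrac{1}{2\beta}$ that is not absorbed when $\beta$ is small, whereas invoking $1-e^{-2\beta}<2\beta$ makes the stated constant $q+q^{-1}$ come out exactly. The hypothesis $n^{1-\alpha}>2$ is not actually needed for this argument (it only ensures $n^{1-\alpha}-1>1>0$); I would retain it since it is the standing assumption used in the sequel.
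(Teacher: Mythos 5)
Your proposal is correct. It follows the same overall strategy as the paper's proof --- dominate $\Psi$ pointwise by a decaying exponential and integrate the tail, using the evenness of $\Psi$ to reduce to $2\int_{n^{1-\alpha}}^{\infty}\Psi(t)\,dt$ --- but the derivation of the majorant is genuinely different. The paper applies the Mean Value Theorem to $G_{q,\beta}(x)=\tfrac14\bigl(\nu_{q}(x+1)-\nu_{q}(x-1)\bigr)$ on $[x-1,x+1]$ for $x\geq 1$, obtaining $G_{q,\beta}(x)=\tfrac12\nu_q'(\xi)=\frac{q\beta e^{\beta\xi}}{(e^{\beta\xi}+q)^2}<q\beta e^{-\beta(x-1)}$ and hence $\Psi(x)<\tfrac12\bigl(q+\tfrac1q\bigr)\beta e^{-\beta(x-1)}$; integrating this gives the stated constant $\bigl(q+\tfrac1q\bigr)e^{-\beta(n^{1-\alpha}-1)}$ on the nose, with no further inequality needed, but the bound is only valid for $x\geq 1$ (so the standing hypothesis $n^{1-\alpha}>2$, or at least $n^{1-\alpha}\geq 1$, is implicitly used). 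You instead compute the difference $\nu_q(x+1)-\nu_q(x-1)$ exactly, which yields the majorant $\Psi(x)<\tfrac14(e^{\beta}-e^{-\beta})\bigl(q+\tfrac1q\bigr)e^{-\beta x}$ valid for \emph{all} $x\in\mathbb{R}$, at the cost of having to close the argument with the elementary inequality $1-e^{-2\beta}<2\beta$ to recover the paper's constant. Both are sound; yours is slightly more self-contained (no MVT, no restriction to $x\geq 1$, and your observation that $n^{1-\alpha}>2$ is not needed is accurate for your version), while the paper's produces the target bound directly without the final constant comparison.
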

\begin{proof}
Since 
\begin{equation}
G_{q, \beta }(x)=\frac{1}{4}\left(\nu_{q, \beta }(x+1)-\nu_{q, \beta }(x-1)\right),~x \in (-\infty, \infty),
\end{equation}
first letting 
$ x \in [1, \infty)$, and using Mean Value Theorem, we get 
\begin{equation}
G_{q, \beta }(x)=\frac{1}{4} \nu'_{q, \beta }(\xi)2=\frac{1}{2}\nu'_{q, \beta }(\xi)=\frac{q  \beta e^{\beta \xi}}{(e^{\beta \xi}+q  )^2  },~ 0\leq x-1<\xi< x+1.
\end{equation}
Also, we obtain  that 
\begin{equation*}
G_{q, \beta }(x)<\frac{q  \beta }{e^{\beta  \xi}}<\frac{q  \beta }{e^{\beta(x-1)}}
\end{equation*}
and similarly,  
\begin{equation*}
G_{ \frac{1}{q}, \beta }(x)<\frac{\frac{1}{q} \beta} { e^{\beta  \xi}} < \frac{\frac{1}{q} \beta} { e^{\beta  (x-1)}}
\end{equation*}
which imply 
\begin{align*}
\Psi<\frac{1}{2}\left( \frac{q  \beta }{e^{\beta(x-1)}}+  \frac{\frac{1}{q} \beta} { e^{\beta  (x-1)}}\right)\\
\Psi<\frac{1}{2}\left( q+\frac{1}{q} \right) \beta e^{-\beta(x-1) }.
\end{align*}

Let 
\begin{equation*}
F:=\{ v\in \mathbb{R}:|nx-v|\geq n^{1-\alpha} \}
\end{equation*}
and then we can write that 
\begin{align*}
\int_F \Psi(nx-v) dv&=\int_F \Psi(|nx-v|) dv<\frac{1}{2}\left( q+\frac{1}{q} \right) \beta \int_F e^{-\beta (|nx-v|-1)} dv\\
&=2 \cdot \frac{1}{2}\left( q+\frac{1}{q} \right) \beta  \int_{n^{1-\alpha}}^{\infty} e^{-\beta (x-1)}dx\\
&=\left( q+\frac{1}{q} \right) \beta  \int_{n^{1-\alpha}-1}^{\infty} e^{-\beta  y} dy \\
&=\left( q+\frac{1}{q} \right) e^{-\beta y} \vert_{\infty}^{n^{\alpha-1}-1}=\left( q+\frac{1}{q} \right) e^{-\beta (n^{1-\alpha}-1)}\\
&=\frac{\left( q+\frac{1}{q} \right)}{e^{\beta (n^{1-\alpha}-1)}}.
\end{align*}
Hence, we complete the proof. 
\end{proof}
Recall the modulus of continuity  of a function $f$ for $\theta >0$ on $\mathbb{R}$ as follows:
\begin{displaymath}
\omega(f,\theta):=\sup_{ \begin{subarray}{l}
x,y \in \mathbb{R}\\
|x-y|\leq\theta \end{subarray} } |f(x)-f(y)|.
\end{displaymath}
Now we are ready to present quantitative convergence results for our operators. Note that for Theorems 2, 3, 4, we let  $0<\alpha<1$, $n\in \mathbb{N}$ such that $n^{1-\alpha}>2$. 
\begin{theorem}
\label{theorem3}
\begin{equation*}
\left| \mathtt{B}_n (f)(x)-f(x)\right|\leq\omega\left(f,\frac{1}{n^{\alpha}}\right)+ \frac{2\left( q+\frac{1}{q} \right)  \Vert f \Vert_{\infty}}{e^{ \beta (n^{1-\alpha}-1)}}=:\mathtt{T}
\end{equation*}
and 
\begin{equation*}
\Vert \mathtt{B}_n(f)-f\Vert_{\infty} \leq \mathtt{T}
\end{equation*} hold for   $f\in C_B(\mathbb{R})$. 
Furthermore,  we have  that $ \displaystyle \lim_{n\rightarrow \infty} \mathtt{B}_n(f)=f$, pointwise and uniformly for $f\in C_{UB}(\mathbb{R})$, consisting of the functions which are uniformly continuous and bounded on $\mathbb{R}$.
\end{theorem}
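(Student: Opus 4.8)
The plan is to run the standard positive‑linear‑operator argument, using that the kernel $\Psi(nx-\,\cdot\,)$ is nonnegative and integrates to $1$. First I would exploit the normalization $\int_{-\infty}^{\infty}\Psi(nx-v)\,dv=1$ to write
\[
\mathtt{B}_n(f)(x)-f(x)=\int_{-\infty}^{\infty}\Bigl(f\bigl(\tfrac{v}{n}\bigr)-f(x)\Bigr)\Psi(nx-v)\,dv ,
\]
and then bound the modulus by pulling the absolute value inside the integral and using $\Psi\geq 0$.

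Next I would split the line at the threshold $|nx-v|=n^{1-\alpha}$. On the ``near'' set $\{v:\ |nx-v|<n^{1-\alpha}\}$ we have $\bigl|\tfrac{v}{n}-x\bigr|<\tfrac{1}{n^{\alpha}}$, hence $\bigl|f(\tfrac{v}{n})-f(x)\bigr|\leq\omega\!\left(f,\tfrac{1}{n^{\alpha}}\right)$, so that piece is at most $\omega\!\left(f,\tfrac{1}{n^{\alpha}}\right)\int_{-\infty}^{\infty}\Psi(nx-v)\,dv=\omega\!\left(f,\tfrac{1}{n^{\alpha}}\right)$. On the ``far'' set $F=\{v:\ |nx-v|\geq n^{1-\alpha}\}$ I would use the crude estimate $\bigl|f(\tfrac{v}{n})-f(x)\bigr|\leq 2\Vert f\Vert_{\infty}$ together with Theorem \ref{t1}, which gives $\int_{F}\Psi(nx-v)\,dv<\bigl(q+\tfrac{1}{q}\bigr)e^{-\beta(n^{1-\alpha}-1)}$; hence that piece is at most $2\Vert f\Vert_{\infty}\bigl(q+\tfrac{1}{q}\bigr)e^{-\beta(n^{1-\alpha}-1)}$. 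Adding the two contributions yields the pointwise bound $\mathtt{T}$, and since $\mathtt{T}$ is independent of $x$, taking the supremum over $x\in\mathbb{R}$ gives $\Vert \mathtt{B}_n(f)-f\Vert_{\infty}\leq\mathtt{T}$.

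For the convergence assertion, I would observe that for $f\in C_{UB}(\mathbb{R})$ the uniform continuity forces $\omega\!\left(f,\tfrac{1}{n^{\alpha}}\right)\to 0$ as $n\to\infty$ because $\tfrac{1}{n^{\alpha}}\to 0$; and since $0<\alpha<1$ we have $n^{1-\alpha}\to\infty$, so $e^{-\beta(n^{1-\alpha}-1)}\to 0$ and the second term of $\mathtt{T}$ also vanishes. Therefore $\mathtt{T}\to 0$, which gives uniform convergence $\mathtt{B}_n(f)\to f$, and in particular pointwise convergence.

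I expect the only point requiring care is the translation of the cutoff $|nx-v|<n^{1-\alpha}$ into the statement $\bigl|x-\tfrac{v}{n}\bigr|<\tfrac{1}{n^{\alpha}}$ (so that the modulus of continuity at scale $n^{-\alpha}$ is the right quantity), and the remark that it is uniform continuity of $f$, not merely boundedness, that makes $\omega\!\left(f,\tfrac{1}{n^{\alpha}}\right)\to 0$; everything else is a routine splitting-and-estimating computation.
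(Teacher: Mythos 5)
Your proposal is correct and follows essentially the same route as the paper: both write the difference as an integral against the normalized kernel, split $\mathbb{R}$ at $\left|\frac{v}{n}-x\right|=\frac{1}{n^{\alpha}}$ (equivalently $|nx-v|=n^{1-\alpha}$), bound the near part by $\omega\left(f,\frac{1}{n^{\alpha}}\right)$ and the far part by $2\Vert f\Vert_{\infty}$ times the tail estimate of Theorem \ref{t1}. The concluding remarks on uniformity of the bound and on uniform continuity driving $\omega\left(f,\frac{1}{n^{\alpha}}\right)\to 0$ match the paper's argument as well.
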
 
\begin{proof}
Let
$ \displaystyle F_1:=\left\lbrace v\in \mathbb{R}:\left| \frac{v}{n}-x \right|<\frac{1}{n^{\alpha}} \right \rbrace$,
$ \displaystyle F_2:=\left\lbrace v\in \mathbb{R}:\left| \frac{v}{n}-x \right| \geq \frac{1}{n^{\alpha}} \right \rbrace.$
Then $F_1\cup F_2=\mathbb{R}$ and we can write by using Theorem \ref{t1} that 
\begin{align*}
\vert \mathtt{B}_n(f)(x)-f(x)\vert&=\left\vert \int_{-\infty}^{ \infty} f\left(\frac{v}{n}\right) \Psi(nx-v)dv-f(x)\int_{-\infty}^{ \infty} \Psi(nx-v)dv \right\vert\\
%&=\left\vert \int_{-\infty}^{ \infty} \left( f\left(\frac{v}{n}\right)-f(x)\right) \Psi(nx-v)dv\right\vert\\
&\leq  \int_{-\infty}^{ \infty} \left\vert f\left(\frac{v}{n}\right)-f(x)\right\vert \Psi(nx-v)dv\\
&= \int_{F_1} \left\vert f \left(\frac{v}{n}\right)-f(x)\right\vert \Psi(nx-v)dv+  \int_{F_2} \left\vert f \left(\frac{v}{n}\right)-f(x)\right\vert \Psi(nx-v)dv\\
&\leq \int_{F_1} \omega \left( f, \left\vert \frac{v}{n}-x \right\vert \right) \Psi(nx-v)dv+  2 \Vert f \Vert_{\infty}\int_{F_2}  \Psi(nx-v)dv\\
%& \leq \omega\left( f, \left\vert \frac{v}{n}-x \right\vert \right)\int_{F_1}  \Phi(nx-v)dv+   2 \Vert f \Vert_{\infty} \frac{\left( q+\frac{1}{q} \right)}{B^{\beta (n^{1-\alpha}-1)}}\\
& \leq \omega \left( f, \frac{1}{n^{\alpha}} \right)+  \frac{  2 \Vert f \Vert_{\infty} \left( q+\frac{1}{q} \right)}{e^{\beta (n^{1-\alpha}-1)}}
\end{align*}
which gives the desired results. 
\end{proof}
\begin{theorem}
\label{theorem4}
\begin{equation*}
\left\vert \mathtt{B}_n^* (f)(x)-f(x)\right\vert \leq\omega\left(f,\frac{1}{n}+\frac{1}{n^{\alpha}}\right)+ \frac{2\left( q+\frac{1}{q} \right)\Vert f \Vert_{\infty}}{e^{\beta (n^{1-\alpha}-1)}}=:\mathtt{E}
\end{equation*} 
and 
\begin{equation*}
\Vert \mathtt{B}_n^*(f)-f\Vert_{\infty} \leq \mathtt{E}
\end{equation*} hold for $f\in C_B(\mathbb{R})$.
Furthermore, we have that $\displaystyle \lim_{n\rightarrow \infty} \mathtt{B}_n^*(f)=f$, pointwise and uniformly for  $f\in C_{UB}(\mathbb{R})$.
\end{theorem}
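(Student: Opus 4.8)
The plan is to mirror the proof of Theorem \ref{theorem3}, adding one extra layer of bookkeeping to handle the inner Kantorovich integral. First I would use that $n\int_0^{1/n} dh = 1$ together with $\int_{-\infty}^{\infty}\Psi(nx-v)\,dv = 1$ to rewrite
\[
f(x) = n\int_{-\infty}^{\infty}\left(\int_0^{1/n} f(x)\,dh\right)\Psi(nx-v)\,dv,
\]
so that, by the second representation of $\mathtt{B}_n^*$ in \eqref{3.2},
\[
\left|\mathtt{B}_n^*(f)(x) - f(x)\right| \leq n\int_{-\infty}^{\infty}\left(\int_0^{1/n}\left|f\left(h+\frac{v}{n}\right) - f(x)\right|\,dh\right)\Psi(nx-v)\,dv.
\]

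Next I would split the outer integral over $F_1 := \{v\in\mathbb{R}: |v/n - x| < 1/n^{\alpha}\}$ and $F_2 := \{v\in\mathbb{R}: |v/n - x| \geq 1/n^{\alpha}\}$, exactly as in Theorem \ref{theorem3}. On $F_1$, for every $h\in[0,1/n]$ the triangle inequality gives $|h + v/n - x| \leq h + |v/n - x| < 1/n + 1/n^{\alpha}$, hence $|f(h+v/n) - f(x)| \leq \omega(f, 1/n + 1/n^{\alpha})$; integrating out $h$ contributes a factor $1/n$ that cancels the $n$ in front, and using $\int_{F_1}\Psi(nx-v)\,dv \leq 1$ bounds the $F_1$-part by $\omega(f, 1/n + 1/n^{\alpha})$. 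On $F_2$ I would use the crude estimate $|f(h+v/n) - f(x)| \leq 2\|f\|_{\infty}$, again integrate out $h$, and then invoke Theorem \ref{t1}, noting that $F_2 = \{v : |nx - v| \geq n^{1-\alpha}\}$, to bound $\int_{F_2}\Psi(nx-v)\,dv$ by $(q + 1/q)/e^{\beta(n^{1-\alpha}-1)}$; this controls the $F_2$-part by $2\|f\|_{\infty}(q+1/q)/e^{\beta(n^{1-\alpha}-1)}$. Adding the two estimates gives the pointwise bound $\mathtt{E}$, and taking the supremum over $x\in\mathbb{R}$ gives the uniform bound.

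For the final assertion, I would observe that for $f\in C_{UB}(\mathbb{R})$ one has $\omega(f,\delta)\to 0$ as $\delta\to 0^{+}$, so $\omega(f, 1/n + 1/n^{\alpha})\to 0$, while $0<\alpha<1$ forces $n^{1-\alpha}\to\infty$ and hence $e^{-\beta(n^{1-\alpha}-1)}\to 0$; since $\mathtt{E}$ is independent of $x$, this yields both pointwise and uniform convergence $\mathtt{B}_n^*(f)\to f$. I do not expect a genuine obstacle: the only point requiring care is the placement of the factor $n$ relative to the $dh$-integration (a Tonelli-type interchange of nonnegative integrands) and the fact that on $F_1$ the Kantorovich shift $h\le 1/n$ forces the modulus argument to grow from $1/n^{\alpha}$ to $1/n + 1/n^{\alpha}$, which is precisely why $\mathtt{E}$ differs from $\mathtt{T}$ of Theorem \ref{theorem3}. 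The rest is a verbatim repetition of that argument.
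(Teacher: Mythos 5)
Your proposal is correct and follows essentially the same route as the paper's proof: the same rewriting of $f(x)$ via the inner Kantorovich integral, the same split over $F_1$ and $F_2$, the modulus bound $\omega(f,\tfrac{1}{n}+\tfrac{1}{n^{\alpha}})$ on $F_1$, and the crude $2\Vert f\Vert_{\infty}$ bound on $F_2$ combined with Theorem \ref{t1}. The only cosmetic difference is that the paper first writes $\omega(f,|t|+\tfrac{1}{n^{\alpha}})$ inside the $t$-integral before passing to the final bound, which changes nothing.
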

\begin{proof}
Let $F_1$ and $F_2$ be defined as above. Then we obtain by using Theorem \ref{t1} that
\smallskip
\begin{align*}
\left\vert \mathtt{B}_n^* (f)(x)-f(x)\right\vert&=\left\vert n \int_{-\infty}^{ \infty} \left( \int^{\frac{v+1}{n}}_{\frac{v}{n}} f \left( t\right)dt\right) \Psi(nx-v) dv -\int_{-\infty}^{ \infty} f(x) \Psi(nx-v)dv  \right \vert \\
&=\left\vert n \int_{-\infty}^{ \infty} \left( \int^{\frac{v+1}{n}}_{\frac{v}{n}} f \left( t\right)dt\right) \Psi(nx-v) dv -n \int_{-\infty}^{ \infty} \left(   \int^{\frac{v+1}{n}}_{\frac{v}{n}} f(x)dt \right) \Psi(nx-v)dv \right\vert \\ 
%&=\left\vert n \int_{-\infty}^{\infty} \left( \int^{\frac{v+1}{n}}_{\frac{u}{n}} (f(t)-f(x))dt\right) \Phi(nx-u) du \right\vert\\
&\leq  n \int_{-\infty}^{ \infty} \left( \int^{\frac{v+1}{n}}_{\frac{v}{n}} \left\vert f(t)-f(x)\right\vert dt
\right) \Psi(nx-v) dv\\
&=  n \int_{-\infty}^{ \infty} \left( \int^{\frac{1}{n}}_{0} \left\vert f\left(t+\frac{v}{n}\right) -f(x)\right\vert dt
\right) \Psi(nx-v) dv \\
&\leq   \int_{F_1} \left( n  \int^{\frac{1}{n}}_{0} \left\vert f\left(t+\frac{v}{n}\right) -f(x)\right\vert dt
\right)  \Psi(nx-v) dv \\
&+\int_{F_2} \left( n  \int^{\frac{1}{n}}_{0} \left\vert f\left(t+\frac{v}{n}\right) -f(x)\right\vert dt
\right) \Psi(nx-v) dv\\
&\leq   \int_{F_1} \left( n  \int^{\frac{1}{n}}_{0}  \omega \left(f,|t|+\frac{1}{n^{\alpha}}\right)dt
\right) \Psi(nx-v) dv +2 \Vert f \Vert_{\infty}\int_{F_2}  \Psi(nx-v)dv\\
% &\leq \omega \left(f, \frac{1}{n}+\frac{1}{n^{\alpha}}\right)+ 2 \Vert f \Vert_{\infty} \frac{\left( q+\frac{1}{q} \right) }{B^{\beta (n^{1-\alpha}-1)}}\\
&\leq \omega \left(f, \frac{1}{n}+\frac{1}{n^{\alpha}}\right)+ \frac{2\left( q+\frac{1}{q} \right)  \Vert f \Vert_{\infty}}{e^{\beta (n^{1-\alpha}-1)}}
\end{align*}
which gives the desired results. 
\end{proof}
\begin{theorem}
\label{theorem5}
\begin{equation*}
\left\vert \overline{\mathtt{B}_n} (f)(x)-f(x)\right\vert \leq\omega\left(f,\frac{1}{n}+\frac{1}{n^{\alpha}}\right)+ \frac{ 2\left( q+\frac{1}{q} \right) \Vert f \Vert_{\infty}}{e^{ \beta (n^{1-\alpha}-1)}}=\mathtt{E}
\end{equation*} 
and 
\begin{equation*}
\Vert \overline{\mathtt{B}_n}(f)-f\Vert_{\infty} \leq \mathtt{E}
\end{equation*} hold for $f\in C_B(\mathbb{R})$.
Furthermore  we have  $\displaystyle \lim_{n\rightarrow \infty} \overline{\mathtt{B}_n}(f)=f$, pointwise and uniformly for  $f\in C_{UB}(\mathbb{R})$.
\end{theorem}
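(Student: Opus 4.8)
The plan is to follow the template of Theorems \ref{theorem3} and \ref{theorem4} essentially verbatim, using that $\overline{\mathtt{B}_n}$ is again a positive linear operator that reproduces constants. First I would exploit $\sum_{s=1}^r w_s = 1$ together with $\int_{-\infty}^{\infty}\Psi(nx-v)\,dv = 1$ to write
\[
\overline{\mathtt{B}_n}(f)(x)-f(x)=\int_{-\infty}^{\infty}\left(\sum_{s=1}^r w_s\left[f\left(\tfrac{v}{n}+\tfrac{s}{nr}\right)-f(x)\right]\right)\Psi(nx-v)\,dv,
\]
so that, since $w_s\ge 0$, the triangle inequality gives
$\left|\overline{\mathtt{B}_n}(f)(x)-f(x)\right|\le\int_{-\infty}^{\infty}\sum_{s=1}^r w_s\left|f\left(\tfrac{v}{n}+\tfrac{s}{nr}\right)-f(x)\right|\Psi(nx-v)\,dv.$

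Next I would split the outer integral over the same two sets used before, $F_1=\{v\in\mathbb{R}:|\tfrac{v}{n}-x|<\tfrac{1}{n^{\alpha}}\}$ and $F_2=\{v\in\mathbb{R}:|\tfrac{v}{n}-x|\ge\tfrac{1}{n^{\alpha}}\}$. On $F_1$, for each $s\in\{1,\dots,r\}$ the triangle inequality yields $|\tfrac{v}{n}+\tfrac{s}{nr}-x|\le|\tfrac{v}{n}-x|+\tfrac{s}{nr}\le\tfrac{1}{n^{\alpha}}+\tfrac{1}{n}$, because $\tfrac{s}{nr}\le\tfrac1n$; hence $|f(\tfrac{v}{n}+\tfrac{s}{nr})-f(x)|\le\omega(f,\tfrac1n+\tfrac1{n^{\alpha}})$, and using $\sum_s w_s=1$ and $\int_{F_1}\Psi\le 1$ the $F_1$-part is at most $\omega(f,\tfrac1n+\tfrac1{n^{\alpha}})$. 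On $F_2$, I bound $|f(\tfrac{v}{n}+\tfrac{s}{nr})-f(x)|\le 2\|f\|_{\infty}$, pull out $\sum_s w_s=1$, and apply Theorem \ref{t1} to get $2\|f\|_{\infty}\int_{F_2}\Psi(nx-v)\,dv<\tfrac{2(q+1/q)\|f\|_{\infty}}{e^{\beta(n^{1-\alpha}-1)}}$. Adding the two contributions gives the pointwise estimate $|\overline{\mathtt{B}_n}(f)(x)-f(x)|\le\mathtt{E}$, and taking the supremum over $x\in\mathbb{R}$ gives $\|\overline{\mathtt{B}_n}(f)-f\|_{\infty}\le\mathtt{E}$.

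For the convergence assertion I would observe that for $f\in C_{UB}(\mathbb{R})$ one has $\omega(f,\delta)\to 0$ as $\delta\to 0^+$, while $\tfrac1n+\tfrac1{n^{\alpha}}\to 0$ and, since $1-\alpha>0$, also $e^{-\beta(n^{1-\alpha}-1)}\to 0$; hence $\mathtt{E}\to 0$, which yields uniform (hence pointwise) convergence. I do not anticipate a genuine obstacle: the only step needing a moment's care is the elementary inequality $\tfrac{s}{nr}\le\tfrac1n$ for all $1\le s\le r$, which controls the extra node shift and is exactly what produces the $\tfrac1n$ inside the modulus of continuity, just as in Theorem \ref{theorem4}. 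Everything else is a routine transcription of the earlier proofs, with the integral average $n\int_0^{1/n}f(t+\tfrac{v}{n})\,dt$ of Theorem \ref{theorem4} replaced by the convex combination $\sum_{s=1}^r w_s f(\tfrac{v}{n}+\tfrac{s}{nr})$.
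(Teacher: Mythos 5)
Your proposal is correct and follows essentially the same route as the paper's own proof: the decomposition into the sets $F_1$ and $F_2$, the bound $\bigl|\tfrac{v}{n}+\tfrac{s}{nr}-x\bigr|\leq \tfrac{1}{n^{\alpha}}+\tfrac{1}{n}$ on $F_1$ via $\tfrac{s}{nr}\leq\tfrac{1}{n}$, and the use of Theorem \ref{t1} with the crude bound $2\Vert f\Vert_{\infty}$ on $F_2$ are exactly the steps the paper takes. No issues.
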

\begin{proof}
Again, we can write by using  Theorem \ref{t1} that
\begin{align*}
|\overline{\mathtt{B}_n}(f)(x)-f(x)|&=\left\vert \int_{-\infty}^{\infty}  \sum_{s=1}^r w_s f \left( \frac{v}{n}+\frac{s}{nr}\right) \Psi(nx-v) dv-  \int_{-\infty}^{\infty} \left( \sum_{s=1}^r w_s f(x)\right) \Psi(nx-v) dv\right\vert\\
%&=\left\vert \int_{-\infty}^{ \infty}  \sum_{s=1}^r w_s \left( f \left( \frac{v}{n}+\frac{s}{nr}\right)-f(x)\right)  \Psi(nx-v) dv\right \vert\\
&\leq \int_{-\infty}^{ \infty}  \sum_{s=1}^r w_s\left\vert f \left( \frac{v}{n}+\frac{s}{nr}\right)-f(x)\right\vert  \Psi(nx-v) dv\\
&\leq \int_{F_1}  \sum_{s=1}^r w_s \left\vert f \left( \frac{v}{n}+\frac{s}{nr}\right)-f(x)\right\vert  \Psi(nx-v) dv \\
&+\int_{F_2}  \sum_{s=1}^r w_s  \left\vert f \left( \frac{v}{n}+\frac{s}{nr}\right)-f(x)\right\vert  \Psi(nx-v) dv\\
%+2 \Vert f \Vert_{\infty}\int_{F_2}   \Psi(nx-v)dv\\ 
%&\leq \omega \left(f, \frac{1}{n}+\frac{1}{n^{\alpha}}\right)+ 2 \Vert f \Vert_{\infty} \frac{\left( q+\frac{1}{q} \right)}{B^{\beta (n^{1-\alpha}-1)}}\\
&\leq \omega \left(f, \frac{1}{n}+\frac{1}{n^{\alpha}}\right)+ \frac{2 \left( q+\frac{1}{q} \right)\Vert f \Vert_{\infty}}{e^{\beta (n^{1-\alpha}-1)}}
\end{align*}
which gives the desired results. 
\end{proof}
\begin{proposition}
\begin{equation*}
\int_{-\infty}^{\infty} |h|^k \Psi(h)dh \leq   \frac{1-e^{-\beta }}{(1+e^{-\beta })}\frac{1}{k+1} +\left( q+\frac{1}{q} \right)\frac{e^{\beta}}{\beta^k} \Gamma(k+1)< \infty 
\end{equation*}
holds for $k \in \mathbb{N}$.
\end{proposition}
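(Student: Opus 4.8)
The plan is to exploit the symmetry $\Psi(h)=\Psi(-h)$ to pass to an integral over $[0,\infty)$, and then to split that half-line at $h=1$ so that the two pieces match the two summands on the right-hand side. Concretely, I would first write
\[
\int_{-\infty}^{\infty} |h|^k \Psi(h)\,dh = 2\int_0^{\infty} h^k \Psi(h)\,dh = 2\int_0^{1} h^k \Psi(h)\,dh + 2\int_1^{\infty} h^k \Psi(h)\,dh,
\]
and estimate the two integrals by two different majorizations of $\Psi$.

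For the near piece on $[0,1]$, I would invoke the uniform bound coming from the global maximum recorded in Section 2: the maximal value of $G_{q,\beta}$, and likewise of $G_{\frac{1}{q},\beta}$, equals $\frac{1-e^{-\beta}}{2(1+e^{-\beta})}$ independently of the deformation parameter, so $\Psi(h)=\frac12\bigl(G_{q,\beta}(h)+G_{\frac1q,\beta}(h)\bigr)\le \frac{1-e^{-\beta}}{2(1+e^{-\beta})}$ for every $h$. Hence
\[
2\int_0^{1} h^k \Psi(h)\,dh \le \frac{1-e^{-\beta}}{1+e^{-\beta}}\int_0^{1} h^k\,dh = \frac{1-e^{-\beta}}{1+e^{-\beta}}\cdot\frac{1}{k+1},
\]
which is exactly the first summand.

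For the far piece on $[1,\infty)$, I would recycle the pointwise estimate established inside the proof of Theorem \ref{t1}, namely $\Psi(h) < \frac12\left(q+\frac1q\right)\beta\, e^{-\beta(h-1)}$ for $h\ge 1$. This gives
\[
2\int_1^{\infty} h^k \Psi(h)\,dh < \left(q+\tfrac1q\right)\beta e^{\beta}\int_1^{\infty} h^k e^{-\beta h}\,dh \le \left(q+\tfrac1q\right)\beta e^{\beta}\int_0^{\infty} h^k e^{-\beta h}\,dh,
\]
and the last integral is the Gamma integral: the substitution $u=\beta h$ turns it into $\beta^{-(k+1)}\Gamma(k+1)$, so the far piece is bounded by $\left(q+\frac1q\right)\frac{e^{\beta}}{\beta^{k}}\Gamma(k+1)$, the second summand. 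Adding the two bounds yields the claimed inequality, and finiteness is immediate since $\Gamma(k+1)=k!<\infty$.

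I do not anticipate a genuine obstacle. The only step needing an explicit word of justification is the uniform bound $\Psi\le\frac{1-e^{-\beta}}{2(1+e^{-\beta})}$, i.e. that the common global maximum of $G_{q,\beta}$ and $G_{\frac1q,\beta}$ dominates their average — and this is precisely what the global-maximum formula of Section 2 delivers, once one observes that the maximal value is independent of $q$. Everything else is a routine split-and-estimate argument, with the exponential decay on $[1,\infty)$ taken verbatim from the proof of Theorem \ref{t1}.
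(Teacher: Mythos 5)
Your proposal is correct and follows essentially the same route as the paper: split the half-line integral at $h=1$, bound $\Psi$ on $[0,1]$ by its global maximum $\frac{1-e^{-\beta}}{2(1+e^{-\beta})}$, and on $[1,\infty)$ by the exponential estimate $\frac{1}{2}\left(q+\frac{1}{q}\right)\beta e^{-\beta(h-1)}$ from the proof of Theorem \ref{t1}, then recognize the Gamma integral. Your explicit remark that the maximum of $G_{q,\beta}$ is independent of $q$ (so the average $\Psi$ inherits the same bound) is a welcome clarification of a step the paper leaves implicit.
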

\begin{proof} 
We can easily compute that
\begin{align*}
\int_{-\infty}^{\infty} |h|^k \Psi(h)dh&=2\int_{0}^{\infty} h^k  \Psi(h)dh\\
&=2\left(\int_{0}^{1} h^k  \Psi(h)dh +\int_{1}^{\infty} h^k  \Psi(h)dh\right)\\
&\leq 2\left( \frac{1-e^{-\beta }}{2(1+e^{-\beta })}\int_{0}^{1} h^k dh +\int_{1}^{\infty} h^k \frac{1}{2}\left( q+\frac{1}{q} \right) \beta e^{-\beta(h-1)} dh\right)\\
&\leq 2\left( \frac{1-e^{-\beta }}{2(1+e^{-\beta })}\frac{1}{k+1} +\frac{1}{2}\left( q+\frac{1}{q} \right)\beta \int_{1}^{\infty} h^k e^{-\beta(h-1)} dh\right)\\
&\leq 2\left( \frac{1-e^{-\beta }}{2(1+e^{-\beta })}\frac{1}{k+1} +\frac{1}{2} \left( q+\frac{1}{q} \right)\frac{\beta e^{\beta}}{\beta^k} \int_{0}^{\infty} h^k \beta^k e^{-\beta h} \beta dh\right)\\
%&\leq  \frac{1-e^{-\beta }}{(1+e^{-\beta })}\frac{1}{k+1} +\left( q+\frac{1}{q} \right)\frac{ e^{\beta}}{\beta^k} \int_{1}^{\infty} x^k e^{-x} dx \\
&\leq  \frac{1-e^{-\beta }}{(1+e^{-\beta })}\frac{1}{k+1} +\left( q+\frac{1}{q} \right)\frac{ e^{\beta}}{\beta^k} \int_{0}^{\infty} x^k e^{-x} dx \\
&\leq  \frac{1-e^{-\beta }}{(1+e^{-\beta })}\frac{1}{k+1} +\left( q+\frac{1}{q} \right)\frac{ e^{\beta}}{\beta^k} \Gamma(k+1)\\
&\leq  \frac{1-e^{-\beta }}{(1+e^{-\beta })}\frac{1}{k+1} +\left( q+\frac{1}{q} \right)\frac{ e^{\beta} k!}{\beta^k} < \infty.
\end{align*}
\end{proof}
\begin{theorem}
\label{theorem6}
\begin{equation}
\label{38}
\omega(\mathtt{B}_n(f),\theta)\leq\omega(f,\theta),~\theta >0
\end{equation}
 holds for $f\in C_B(\mathbb{R})\cup C_U(\mathbb{R})$. 
Furthermore, we have  $\mathtt{B}_n(f)\in C_U (\mathbb{R})$  for  $f\in C_U(\mathbb{R})$ where  $ C_U(\mathbb{R})$ is the set of all uniformly continuous functions on  $\mathbb{R}$.
\end{theorem}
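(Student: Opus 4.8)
The plan is to exploit the translation-invariant (convolution) structure of $\mathtt{B}_n$. First I would apply the change of variable $w=nx-v$ in \eqref{3.1}, which recasts the operator in the convolution form
\[
\mathtt{B}_n(f)(x)=\int_{-\infty}^{\infty} f\!\left(x-\frac{w}{n}\right)\Psi(w)\,dw .
\]
Before using this I should note that the integral is well defined for every $f\in C_B(\mathbb{R})\cup C_U(\mathbb{R})$: a uniformly continuous function on $\mathbb{R}$ grows at most linearly, while $\Psi$ decays exponentially by the bound established in the proof of Theorem \ref{t1} (and has finite moments by the Proposition above), so no integrability issue arises even when $f$ is unbounded.

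Next, I would fix $x,y\in\mathbb{R}$ with $|x-y|\le\theta$ and subtract the two convolution representations:
\[
\mathtt{B}_n(f)(x)-\mathtt{B}_n(f)(y)=\int_{-\infty}^{\infty}\left[f\!\left(x-\frac{w}{n}\right)-f\!\left(y-\frac{w}{n}\right)\right]\Psi(w)\,dw .
\]
The key observation is that the two arguments differ by exactly $x-y$ for every $w$, hence the bracketed term is bounded in absolute value by $\omega(f,|x-y|)\le\omega(f,\theta)$ uniformly in $w$. Moving the absolute value inside the integral and using $\int_{-\infty}^{\infty}\Psi(w)\,dw=1$ gives $|\mathtt{B}_n(f)(x)-\mathtt{B}_n(f)(y)|\le\omega(f,\theta)$; taking the supremum over all admissible pairs $x,y$ yields \eqref{38}.

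For the final assertion, if $f\in C_U(\mathbb{R})$ then $\omega(f,\theta)\to 0$ as $\theta\to 0^{+}$, and \eqref{38} immediately forces $\omega(\mathtt{B}_n(f),\theta)\to 0$ as $\theta\to 0^{+}$, which is precisely uniform continuity of $\mathtt{B}_n(f)$ on $\mathbb{R}$, so $\mathtt{B}_n(f)\in C_U(\mathbb{R})$. I do not expect a genuine obstacle here; the only step needing a line of care is the well-definedness of $\mathtt{B}_n(f)$ for unbounded $f\in C_U(\mathbb{R})$, which the exponential tail bound for $\Psi$ together with the moment estimate of the Proposition already guarantee.
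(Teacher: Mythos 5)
Your argument is correct and is essentially the paper's own proof: both pass to the convolution form $\mathtt{B}_n(f)(x)=\int_{-\infty}^{\infty}f\left(x-\tfrac{h}{n}\right)\Psi(h)\,dh$, note that the two arguments differ by exactly $x-y$, and bound the difference by $\omega(f,|x-y|)\int_{-\infty}^{\infty}\Psi(h)\,dh=\omega(f,|x-y|)$. Your extra remark on well-definedness for unbounded $f\in C_U(\mathbb{R})$ (linear growth versus the exponential decay of $\Psi$) is a sensible point of care that the paper omits, but it does not change the route.
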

\begin{proof}
Since 
\begin{equation*}
\mathtt{B}_n (f)(x)=\int_{-\infty}^{\infty} f\left( x-\frac{h}{n}\right)\Psi(h)dh,
\end{equation*}
let $x,y \in \mathbb{R}$ and then we can write that 
\begin{equation*}
\mathtt{B}_n (f)(x)-\mathtt{B}_n (f)(y)=\int_{-\infty}^{\infty} \left( f \left( x-\frac{h}{n}\right)-f\left( y-\frac{h}{n} \right)\right)\Psi(h)dh.
\end{equation*}
Hence
\begin{align*}
\left \vert \mathtt{B}_n (f)(x)-\mathtt{B}_n (f)(y)\right\vert &\leq \int_{-\infty}^{\infty} \left \vert f \left( x-\frac{h}{n}\right)-f\left( y-\frac{h}{n} \right)\right \vert \Psi(h)dh\\
& \leq \omega (f,|x-y|)\int_{-\infty}^{\infty}  \Psi(h)dh=\omega(f,|x-y|)
\end{align*} holds and by letting $|x-y|\leq \theta$, $\theta>0 $ then we obtain the desired results. 
\end{proof}
\begin{remark}
It is clear that the equality in  (\ref{38}) holds for $f=identity~map=:id$, and we have that 
\begin{equation*}
|\mathtt{B}_n(id)(x)-\mathtt{B}_n(id)(y)|=|id(x)-id(y)|=|x-y|, 
\end{equation*}
\begin{equation*}
\omega(\mathtt{B}_n(id), \theta)=\omega(id, \theta)=\theta>0. 
\end{equation*}
Also, we have that 
\begin{align*}
\mathtt{B}_n(id)(x)&=\int_{-\infty}^{\infty} \left( x-\frac{h}{n}\right) \Psi(h)dh \\
&=x\int_{-\infty}^{\infty}\Psi(h)dh-\frac{1}{n}\int_{-\infty}^{\infty} h \Psi(h)dh=x-\frac{1}{n}\int_{-\infty}^{\infty} h \Psi(h)dh
\end{align*}
and for fixed $x \in \mathbb{R}$,
\begin{align*}
|\mathtt{B}_n(id)(x)|&\leq |x|+\frac{1}{n} \int_{-\infty}^{\infty} |h| \Psi(h)dh \\
&\leq|x|+\frac{1}{n} \left[\frac{1-e^{-\beta }}{2(1+e^{-\beta })} +\frac{ \left( q +\frac{1}{q}\right)e^{\beta }}{\beta} \right]< \infty.
\end{align*}
It is obvious that $id \in C_U (\mathbb{R}).$
\end{remark}
\begin{theorem}
\label{theorem7}
\begin{equation}
\label{42}
\omega (\mathtt{B}^*_n(f),\theta)\leq\omega(f,\theta),~\theta>0
\end{equation} holds for $f\in C_B(\mathbb{R})\cup C_U(\mathbb{R})$.
Furthermore, we have $\mathtt{B}^*_n(f)\in C_U(\mathbb{R})$ for $f\in C_U(\mathbb{R})$.
\end{theorem}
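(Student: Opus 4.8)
The plan is to follow exactly the route used for Theorem \ref{theorem6}, adapted to the extra inner integration coming from the Kantorovich averaging. First I would put $\mathtt{B}^*_n(f)$ into convolution form: in the outer integral of the second representation in (\ref{3.2}) substitute $w=nx-v$, so that $\frac{v}{n}=x-\frac{w}{n}$ and $dv$ becomes $dw$ (with the orientation reversal absorbed by the evenness $\Psi(w)=\Psi(-w)$). This yields
\begin{equation*}
\mathtt{B}^*_n(f)(x) = n \int_{-\infty}^{\infty} \left( \int_0^{1/n} f\!\left( x - \frac{w}{n} + h \right) dh \right) \Psi(w)\, dw .
\end{equation*}

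Next, for $x,y\in\mathbb{R}$ I subtract the two representations and move the difference inside both integrals, using nonnegativity of $\Psi$ and of $dh$, to get
\begin{equation*}
\left| \mathtt{B}^*_n(f)(x) - \mathtt{B}^*_n(f)(y) \right| \leq n \int_{-\infty}^{\infty} \left( \int_0^{1/n} \left| f\!\left( x - \frac{w}{n} + h \right) - f\!\left( y - \frac{w}{n} + h \right) \right| dh \right) \Psi(w)\, dw .
\end{equation*}
Since the two arguments of $f$ in the innermost integrand differ exactly by $x-y$, each such difference is bounded by $\omega(f,|x-y|)$ uniformly in $h$ and $w$. Pulling this constant out, the inner integral contributes a factor $\frac{1}{n}$, which cancels the leading $n$, and $\int_{-\infty}^{\infty}\Psi(w)\,dw=1$ leaves $\left| \mathtt{B}^*_n(f)(x) - \mathtt{B}^*_n(f)(y) \right| \leq \omega(f,|x-y|)$. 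Taking the supremum over all $x,y$ with $|x-y|\leq\theta$ gives (\ref{42}).

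For the final assertion, if $f\in C_U(\mathbb{R})$ then $\omega(f,\theta)\to 0$ as $\theta\to 0^{+}$, so by (\ref{42}) also $\omega(\mathtt{B}^*_n(f),\theta)\to 0$, which means $\mathtt{B}^*_n(f)$ is uniformly continuous; boundedness is immediate from $\|\mathtt{B}^*_n(f)\|_\infty\leq\|f\|_\infty$ (again using $\Psi\geq 0$ with unit mass), so $\mathtt{B}^*_n(f)\in C_U(\mathbb{R})$.

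I do not anticipate any genuine obstacle; the only step needing a little care is the change of variables in the Kantorovich double integral together with the interchange of the $h$- and $w$-integrations, which is justified by the boundedness of $f$ and the integrability of $\Psi$ (Tonelli/Fubini), after which the modulus-of-continuity estimate is entirely routine.
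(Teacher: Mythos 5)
Your proposal is correct and follows essentially the same route as the paper: the paper likewise rewrites $\mathtt{B}^*_n(f)(x)=n\int_{-\infty}^{\infty}\bigl(\int_0^{1/n}f(t+(x-\tfrac{h}{n}))\,dt\bigr)\Psi(h)\,dh$, observes that the two arguments of $f$ differ by exactly $x-y$, and bounds the difference by $\omega(f,|x-y|)\int_{-\infty}^{\infty}\Psi(h)\,dh=\omega(f,|x-y|)$. (A trivial remark: the sign from $dv=-dw$ is already cancelled by the reversal of the integration limits, so the evenness of $\Psi$ is not actually needed for the change of variables.)
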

\begin{proof}
Notice that
\begin{equation*}
\mathtt{B}^*_n(f)(x)=n \int_{-\infty}^{ \infty} \left( \int^{\frac{1}{n}}_0 f \left( t+\left(x-\frac{h}{n}\right)\right)dt\right) \Psi(h)dh
\end{equation*}
 and
\begin{equation*}
\mathtt{B}^*_n(f)(y)=n \int_{-\infty}^{ \infty} \left( \int^{\frac{1}{n}}_0 f \left( t+\left(y-\frac{h}{n}\right)\right)dt\right)\Psi(h)dh
\end{equation*}
for $ x,~y \in \mathbb{R} $.\\
Hence $\left\vert \mathtt{B}^*_n (f)(x)-\mathtt{B}^*_n (f)(y)\right\vert$
\begin{align*}
%=&n \left \vert \left[ \int_{-\infty}^{ \infty} \left( \int^{\frac{1}{n}}_0 \left( f \left( t+\left(x-\frac{h}{n}\right)\right)- \int^{\frac{1}{n}}_0 f \left( t+\left(y-\frac{h}{n}\right)\right) \right) dt\right) \Phi(h)dh\right] \right \vert  \\
\leq& n \int_{-\infty}^{\infty} \left( \int^{\frac{1}{n}}_0 \left \vert f \left( t+\left(x-\frac{h}{n}\right)\right)- f \left( t+\left(y-\frac{h}{n}\right)\right) \right \vert dt\right) \Psi(h)dh \\
\leq &\omega (f,|x-y|)\int_{-\infty}^{\infty} \Psi(h)dh=\omega(f,|x-y|)
\end{align*}
 and again implies  the desired results. 
Furthermore, 
\begin{equation*}
|\mathtt{B}^*_n(id)(x)-\mathtt{B}^*_n(id)(y)|=|x-y|
\end{equation*}
and 
\begin{align*}
|\mathtt{B}^*_n(id)(x)|&\leq n \int_{-\infty}^{ \infty} \left( \int^{\frac{1}{n}}_0 \left(|t|+|x|+\frac{|h|}{n}\right)dt\right) \Psi(h)dh \\& \leq \int_{-\infty}^{\infty} \left(\frac{1}{n}+|x|+\frac{|h|}{n}\right) \Psi(h)dh\\
&= \frac{1}{n}+|x|+\frac{1}{n} \int_{-\infty}^{\infty} |h|\Psi(h)dh < \infty.
\end{align*}
This proves the attainability of (\ref{42}).
\end{proof}
\begin{theorem}
\label{theorem8}
\begin{equation}
\label{46}
\omega( \overline{\mathtt{B}_n}(f),\theta)\leq\omega(f,\theta),~\theta>0
\end{equation}
holds for $f\in C_B(\mathbb{R}) \cup C_U(\mathbb{R}) $.
Furthermore, we have  $ \overline{B_n}(f)\in C_U(\mathbb{R})$ for $f\in C_U(\mathbb{R})$.
\end{theorem}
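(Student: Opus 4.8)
The plan is to follow the template of the proofs of Theorems \ref{theorem6} and \ref{theorem7} essentially verbatim; the single new feature is the convex combination $\sum_{s=1}^{r} w_s = 1$, which will play exactly the role that $\int_{-\infty}^{\infty}\Psi = 1$ plays there. First I would rewrite $\overline{\mathtt{B}_n}(f)$ in convolution form via the substitution $h = nx - v$ (so that $\frac{v}{n} = x - \frac{h}{n}$ and $\Psi(nx-v)\,dv$ becomes $\Psi(h)\,dh$), obtaining
\[
\overline{\mathtt{B}_n}(f)(x)=\int_{-\infty}^{\infty}\left(\sum_{s=1}^{r}w_s\,f\!\left(x-\frac{h}{n}+\frac{s}{nr}\right)\right)\Psi(h)\,dh .
\]
In this form the variable $x$ appears only inside the argument of $f$, while the weights $w_s$ and the kernel $\Psi$ are $x$-independent.

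Next, fix $x,y\in\mathbb{R}$ and subtract the two representations, so that
\[
\overline{\mathtt{B}_n}(f)(x)-\overline{\mathtt{B}_n}(f)(y)=\int_{-\infty}^{\infty}\sum_{s=1}^{r}w_s\left[f\!\left(x-\tfrac{h}{n}+\tfrac{s}{nr}\right)-f\!\left(y-\tfrac{h}{n}+\tfrac{s}{nr}\right)\right]\Psi(h)\,dh .
\]
Since the two arguments inside each bracket differ by exactly $x-y$, taking absolute values and using $w_s\ge 0$ together with the definition of $\omega(f,\cdot)$ gives
\[
\left|\overline{\mathtt{B}_n}(f)(x)-\overline{\mathtt{B}_n}(f)(y)\right|\le\omega\bigl(f,|x-y|\bigr)\int_{-\infty}^{\infty}\left(\sum_{s=1}^{r}w_s\right)\Psi(h)\,dh=\omega\bigl(f,|x-y|\bigr),
\]
where the last equality uses $\sum_{s=1}^{r}w_s=1$ and $\int_{-\infty}^{\infty}\Psi(h)\,dh=1$. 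Restricting to $|x-y|\le\theta$ and passing to the supremum yields (\ref{46}).

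Finally, for the membership assertion: if $f\in C_U(\mathbb{R})$ then $\omega(f,\theta)\to 0$ as $\theta\to 0^+$, and the inequality just established forces $\omega(\overline{\mathtt{B}_n}(f),\theta)\to 0$ as well, so $\overline{\mathtt{B}_n}(f)$ is uniformly continuous on $\mathbb{R}$. As in the Remark following Theorem \ref{theorem6}, one can also note that $\overline{\mathtt{B}_n}(id)(x)=x-\frac{1}{n}\int_{-\infty}^{\infty}h\,\Psi(h)\,dh+\frac{1}{nr}\sum_{s=1}^{r}w_s s$, which is finite-valued and preserves increments, so equality in (\ref{46}) is attained at $f=id$. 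I do not anticipate any genuine obstacle: the only points requiring a line of care are that the change of variables correctly converts $\Psi(nx-v)\,dv$ into $\Psi(h)\,dh$, and that—$r$ being fixed and $f$ bounded (or, when $f\in C_U(\mathbb{R})$, of at most linear growth, so that finiteness follows from the moment estimate in the Proposition)—the finite sum may be freely interchanged with the integral.
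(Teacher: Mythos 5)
Your proposal is correct and follows essentially the same route as the paper: rewrite $\overline{\mathtt{B}_n}(f)$ in convolution form via $h=nx-v$, subtract the representations at $x$ and $y$, and bound by $\omega(f,|x-y|)$ using $\sum_{s=1}^{r}w_s=1$ and $\int_{-\infty}^{\infty}\Psi(h)\,dh=1$, with the identity map witnessing attainability. Your explicit remark that $\omega(f,\theta)\to 0$ transfers to $\omega(\overline{\mathtt{B}_n}(f),\theta)\to 0$, and your note on the linear growth of uniformly continuous functions guaranteeing finiteness via the moment estimate, are slightly more careful than the paper but not a different argument.
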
 
\begin{proof}
Notice that
\begin{equation*}
\overline{\mathtt{B}_n}(f)(x)=\int_{-\infty}^{\infty} \left( \sum_{s=1}^r w_s f \left(\left(x- \frac{h}{n} \right)+\frac{s}{nr}\right)\right) \Psi(h)dh
\end{equation*}
 and
\begin{equation*}
\overline{\mathtt{B}_n}(f)(y)=\int_{-\infty}^{\infty} \left( \sum_{s=1}^r w_s f \left(\left(y- \frac{h}{n} \right)+\frac{s}{nr}\right)\right)\Psi(h)dh
\end{equation*}
for $x,~y \in \mathbb{R}$. \\
Hence  $\left\vert \overline{\mathtt{B}_n} (f)(x)-\overline{\mathtt{B}_n} (f)(y)\right\vert$
\begin{align*}
=&\left \vert \int_{-\infty}^{ \infty}  \sum_{s=1}^r w_s \left[ f \left(\left(x- \frac{h}{n} \right)+\frac{s}{nr}\right)-f \left(\left(y- \frac{h}{n} \right)+\frac{s}{nr}\right)\right] \Psi(h)dh \right \vert\\
\leq &  \int_{-\infty}^{ \infty}  \sum_{s=1}^r w_s\left \vert f \left(\left(x- \frac{h}{n} \right)+\frac{s}{nr}\right)-f \left(\left(y- \frac{h}{n} \right)+\frac{s}{nr}\right)\right \vert \Psi(h)dh \\
\leq &\omega (f,|x-y|)\int_{-\infty}^{\infty}\Psi(h)dh=\omega(f,|x-y|)
\end{align*}
 and this implies the desired results.
Furthermore,
\begin{equation*}
|\overline{\mathtt{B}_n}(id)(x)-\overline{\mathtt{B}_n}(id)(y)|=|x-y|
\end{equation*}
and 
\begin{align*}
|\overline{\mathtt{B}_n}(id)(x)|&\leq \int_{-\infty}^{ \infty} \left( \sum_{s=1}^r w_s  \left(|x|+ \frac{|h|}{n}+ \frac{1}{n} \right)\right)\Psi(h)dh\\
&\leq \int_{-\infty}^{ \infty} \left( |x|+ \frac{|h|}{n}+ \frac{1}{n}\right)\Psi(h)dh= \frac{1}{n}+|x|+\frac{1}{n}\int_{-\infty}^{\infty} |h|dh< \infty
\end{align*}
proving the attainability of (\ref{46}).
\end{proof}
\begin{remark}
Fix $s \in \mathbb{N}$ and let  $f \in C^{(s)}(\mathbb{R})$ such that $f^{(k)} \in C_B (\mathbb{R})$ for $k=1,2,\ldots, s$. Then we can write that 
\begin{equation*}
\mathtt{B}_n(f)(x)=\int_{-\infty}^{ \infty} f\left(x-\frac{h}{n}\right) \Psi(h)dh
\end{equation*}
and with the use of  Leibniz's rule 
\begin{align*}
\frac{\partial^s \mathtt{B}_n(x)}{\partial x^s} & =\int_{-\infty}^{\infty} f^{(s)}\left(x-\frac{h}{n}\right) \Psi(h)dh\\
&=\int_{-\infty}^{ \infty} f^{(s)}\left(\frac{v}{n}\right) \Psi(nx-v)dv=\mathtt{B}_n( f^{(s)})(x),~ for~all~x \in \mathbb{R}.
\end{align*}
\end{remark}
%\begin{proof}
%\begin{align*}
%A_n(f)(x)&=\int_{-\infty}^{+ \infty} f^{(i)}\left(\frac{u}{n}\right) \Phi(nx-u)du, ,~\forall x \in \mathbb{R}\\
%&=\int_{-\infty}^{+ \infty} f^{(i)}\left(x-\frac{h}{n}\right) \Phi(h)dh
%\end{align*}
%\begin{align*}
%\frac{\partial'  A_n(f)(x)}{\partial x^i} & =\int_{-\infty}^{+ \infty} f^{(i)}\left(x-\frac{h}{n}\right) \Phi(h)dh\\
%&=\int_{-\infty}^{+ \infty} f^{(i)}\left(\frac{u}{n}\right) \Phi(nx-u)du=A_n( f^{(i)})(x),~\forall x \in \mathbb{R}.
%\end{align*}
%\end{proof}
Clearly the followings also hold:
\begin{equation*}
\frac{\partial^s \mathtt{B}^*_n(f)(x)}{\partial x^s}=\mathtt{B}^*_n( f^{(s)})(x);~\frac{\partial^s \overline{ \mathtt{B}_n} (f)(x)}{\partial x^s}= \overline{ \mathtt{B}_n}( f^{(s)})(x).
\end{equation*}
\begin{remark}
Fix $s\in \mathbb{N}$  and let  $f \in C^{(s)} (\mathbb{R})$ such that  $f^{(k)} \in C_B (\mathbb {R})$ for $k=1,2,\ldots, s $. We  derive that  
\begin{align*}
\left( \mathtt{B}_n(f)\right)^{(k)}(x)&= \mathtt{B}_n(f^{(k)})(x),\\
\left(\mathtt{B}^*_n(f)\right)^{(k)}(x)&= \mathtt{B}^*_n(f^{(k)})(x),\\
\left(\overline{ \mathtt{B}_n}(f)\right)^{(k)}(x)&=\overline{ \mathtt{B}_n}(f^{(k)})(x),~ for~all~x\in \mathbb{R} ~and~ k=1,2,\ldots, s.
\end{align*}
\end{remark}
\noindent Then we have the followings under the assumptions of  $0<\alpha<1$ and $n \in \mathbb{N}$ such that $n^{1-\alpha}>2$.
\begin{theorem}
Let  $f^{(k)} \in C_B(\mathbb{R})$ for $k=1, 2, \cdots, s \in \mathbb{N}$. Then we have the followings:
\begin{itemize}
\item[(i)]
\begin{equation*}
\left| ( \mathtt{B}_n (f))^{(k)}(x)-f^{(k)}(x)\right|\leq\omega\left(f^{(k)},\frac{1}{n^{\alpha}}\right)+ \frac{2\left( q+\frac{1}{q} \right)  \Vert f^{(k)} \Vert_{\infty}}{e^{ \beta (n^{1-\alpha}-1)}}=:\mathtt{T}_k
\end{equation*}
and 
\begin{equation*}
\Vert (\mathtt{B}_n(f))^{(k)}-f^{(k)}\Vert_{\infty} \leq  \mathtt{T}_k,
%,~ for~every~k=0, 1, \cdots, r \in \mathbb{N}.
\end{equation*}
\item[(ii)]
\begin{equation*}
\left\vert (\mathtt{B}_n^* (f))^{(k)}(x)-f^{(k)}(x)\right\vert \leq\omega\left(f^{(k)},\frac{1}{n}+\frac{1}{n^{\alpha}}\right)+ \frac{2\left( q+\frac{1}{q} \right)\Vert f^{(k)} \Vert_{\infty}}{e^{\beta (n^{1-\alpha}-1)}}=: \mathtt{E}_k
\end{equation*} 
and 
\begin{equation*}
\Vert (\mathtt{B}_n^*(f))^{(k)}-f^{(k)} \Vert_{\infty} \leq \mathtt{E}_k,
% ~for~all~k=0, 1, \cdots, r \in \mathbb{N}.
\end{equation*}
\begin{equation*}
\end{equation*}
\item[(iii)]
\begin{equation*}
\left\vert (\overline{\mathtt{B}_n} (f))^{(k)} (x)-f^{(k)}(x)\right\vert \leq\omega\left(f^{(k)},\frac{1}{n}+\frac{1}{n^{\alpha}}\right)+ \frac{ 2\left( q+\frac{1}{q} \right) \Vert f^{(k)} \Vert_{\infty}}{e^{\beta (n^{1-\alpha}-1)}}=\mathtt{E}_k
\end{equation*} 
and 
\begin{equation*}
\Vert (\overline{\mathtt{B}_n}(f))^{(k)}-f^{(k)}\Vert_{\infty} \leq \mathtt{E}_k.
%~for~all~k=0, 1, \cdots, r \in \mathbb{N}. 
\end{equation*}
\end{itemize}
\end{theorem}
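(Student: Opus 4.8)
The plan is to deduce all three pairs of estimates directly from the zeroth-order results, namely Theorems \ref{theorem3}, \ref{theorem4} and \ref{theorem5}, by combining them with the commutation relations between differentiation and the operators that were recorded in the two Remarks immediately preceding the statement. In other words, the theorem is essentially a corollary: there is nothing new to estimate, only a reindexing of what has already been proved.

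First I would invoke those Remarks. Under the standing hypothesis $f\in C^{(s)}(\mathbb{R})$ with $f^{(j)}\in C_B(\mathbb{R})$ for $j=1,\dots,s$, Leibniz's rule (differentiation under the integral sign, legitimate because $\Psi$ is integrable with $\int_{-\infty}^{\infty}\Psi(h)\,dh=1$ and each $f^{(j)}$ is bounded) yields $(\mathtt{B}_n(f))^{(k)}=\mathtt{B}_n(f^{(k)})$, $(\mathtt{B}_n^*(f))^{(k)}=\mathtt{B}_n^*(f^{(k)})$ and $(\overline{\mathtt{B}_n}(f))^{(k)}=\overline{\mathtt{B}_n}(f^{(k)})$ pointwise on $\mathbb{R}$, for every $k\in\{1,\dots,s\}$.

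Then, since each $f^{(k)}$ belongs to $C_B(\mathbb{R})$ by hypothesis, I would apply Theorem \ref{theorem3} to the function $f^{(k)}$ in place of $f$, obtaining $\bigl|\mathtt{B}_n(f^{(k)})(x)-f^{(k)}(x)\bigr|\le\mathtt{T}_k$ for all $x\in\mathbb{R}$; substituting the commutation identity on the left-hand side gives part (i), and passing to the supremum over $x\in\mathbb{R}$ gives the uniform estimate $\Vert(\mathtt{B}_n(f))^{(k)}-f^{(k)}\Vert_{\infty}\le\mathtt{T}_k$. Parts (ii) and (iii) are obtained in exactly the same way, applying Theorem \ref{theorem4} and Theorem \ref{theorem5} respectively to $f^{(k)}$ and using the corresponding commutation identities for $\mathtt{B}_n^*$ and $\overline{\mathtt{B}_n}$, which produce the bounds with $\mathtt{E}_k$.

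I do not anticipate any genuine obstacle. The only step that warrants an explicit line is the justification that $(\cdot)^{(k)}$ commutes with each operator, and this has already been secured in the preceding Remarks; it rests precisely on $f^{(k)}\in C_B(\mathbb{R})$ together with the normalization $\int_{-\infty}^{\infty}\Psi(h)\,dh=1$, both of which are in force here. Hence the proof amounts to: (1) cite the commutation identities, (2) apply Theorems \ref{theorem3}--\ref{theorem5} to $f^{(k)}$, (3) take suprema.
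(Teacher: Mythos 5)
Your proposal is correct and matches the paper's argument exactly: the paper's proof is a one-line citation of Theorems \ref{theorem3}, \ref{theorem4}, \ref{theorem5}, applied to $f^{(k)}$ via the commutation identities $(\mathtt{B}_n(f))^{(k)}=\mathtt{B}_n(f^{(k)})$ (and analogues) established in the preceding Remarks. You have simply spelled out the steps the paper leaves implicit.
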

\begin{proof}
Taking into consideration of Theorems \ref{theorem3}, \ref{theorem4}, \ref{theorem5}, we immediately obtain the proof.
\end{proof}
Now, we present a similar result on the preservation of global smoothness.
\begin{theorem}
Let $f^{(k)} \in C_B(\mathbb{R}) \cup C_U (\mathbb{R})$, for $k=0, 1, \cdots, s \in \mathbb{N}$. Then we have the followings:
\begin{itemize}
\item[(i)]
\begin{equation*}
\omega \left( (\mathtt{B}_n(f))^{(k)}, \theta \right)\leq \omega \left( f^{(k)}, \theta \right), ~\theta>0,
\end{equation*}
also $\left( \mathtt{B}_n(f) \right)^{(k)} \in C_U(\mathbb{R})$ when   $f^{(k)}\in C_U(\mathbb{R})$,
\item[(ii)]
\begin{equation*}
\omega \left( (\mathtt{B}^*_n(f))^{(k)}, \theta \right)\leq \omega \left( f^{(k)}, \theta \right), ~\theta>0,
\end{equation*}
also $\left( \mathtt{B}^*_n(f) \right)^{(k)} \in C_U(\mathbb{R})$ when   $f^{(k)}\in C_U(\mathbb{R})$,
\item[(iii)]
\begin{equation*}
\omega \left( (\overline{\mathtt{B}_n} (f))^{(k)}, \theta \right)\leq \omega \left( f^{(k)}, \theta \right), ~\theta>0,
\end{equation*}
also   $\left( \overline{\mathtt{B}_n}(f) \right)^{(k)} \in C_U(\mathbb{R})$ when   $f^{(k)}\in C_U(\mathbb{R})$.
\end{itemize}
\end{theorem}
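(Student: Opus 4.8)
The plan is to deduce all three parts from Theorems \ref{theorem6}, \ref{theorem7}, and \ref{theorem8} by invoking the commutation identities recorded in the Remark immediately preceding this statement. First I would note that under the standing hypothesis $f^{(k)}\in C_B(\mathbb{R})\cup C_U(\mathbb{R})$ for $k=0,1,\dots,s$, differentiation under the integral sign (Leibniz's rule, as already justified in that Remark, which is legitimate because $f^{(k)}$ is continuous and $\Psi$ is an integrable density, so difference quotients are dominated) gives
\begin{equation*}
\left(\mathtt{B}_n(f)\right)^{(k)}=\mathtt{B}_n\!\left(f^{(k)}\right),\qquad
\left(\mathtt{B}^*_n(f)\right)^{(k)}=\mathtt{B}^*_n\!\left(f^{(k)}\right),\qquad
\left(\overline{\mathtt{B}_n}(f)\right)^{(k)}=\overline{\mathtt{B}_n}\!\left(f^{(k)}\right)
\end{equation*}
for every $x\in\mathbb{R}$ and $k=0,1,\dots,s$, the case $k=0$ being trivial.

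For part (i), since $f^{(k)}\in C_B(\mathbb{R})\cup C_U(\mathbb{R})$, Theorem \ref{theorem6} applied with $f^{(k)}$ in place of $f$ yields $\omega\!\left(\mathtt{B}_n(f^{(k)}),\theta\right)\leq\omega\!\left(f^{(k)},\theta\right)$ for all $\theta>0$; combining this with the first commutation identity gives the asserted inequality $\omega\!\left((\mathtt{B}_n(f))^{(k)},\theta\right)\leq\omega\!\left(f^{(k)},\theta\right)$. The second assertion of Theorem \ref{theorem6} says that $\mathtt{B}_n(g)\in C_U(\mathbb{R})$ whenever $g\in C_U(\mathbb{R})$; taking $g=f^{(k)}$ shows $(\mathtt{B}_n(f))^{(k)}=\mathtt{B}_n(f^{(k)})\in C_U(\mathbb{R})$ when $f^{(k)}\in C_U(\mathbb{R})$. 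Parts (ii) and (iii) are proved in exactly the same way, using Theorem \ref{theorem7} for $\mathtt{B}^*_n$ and Theorem \ref{theorem8} for $\overline{\mathtt{B}_n}$, together with their respective commutation identities.

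I expect no genuine obstacle: the entire analytic content sits in the preceding Remark (the interchange of differentiation and integration) and in the global smoothness estimates of Theorems \ref{theorem6}--\ref{theorem8}. The only point deserving a line of care is checking that the hypotheses of those theorems survive the substitution $f\mapsto f^{(k)}$ — but the requirement there is precisely membership in $C_B(\mathbb{R})\cup C_U(\mathbb{R})$, which is what we have assumed for each $f^{(k)}$, so nothing further is needed. If desired, one may also remark that the bounds are sharp when $f^{(k)}\in C_U(\mathbb{R})$, via the identity-map examples appearing in Theorems \ref{theorem6}--\ref{theorem8}, although the statement does not require this.
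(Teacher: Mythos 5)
Your proposal is correct and matches the paper's argument: the paper likewise proves this theorem by combining the commutation identities $(\mathtt{B}_n(f))^{(k)}=\mathtt{B}_n(f^{(k)})$ (and their analogues for $\mathtt{B}^*_n$ and $\overline{\mathtt{B}_n}$) from the preceding Remarks with Theorems \ref{theorem6}, \ref{theorem7}, \ref{theorem8} applied to $f^{(k)}$. Your added care about justifying Leibniz's rule and about the hypotheses surviving the substitution $f\mapsto f^{(k)}$ is welcome but does not change the route.
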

\begin{proof}
Taking into consideration of Theorems \ref{theorem6}, \ref{theorem7}, \ref{theorem8}, we immediately obtain the proof.
\end{proof}
Now, we  present results dealing with the improvement of the rate of convergence of our operators by assuming the differentiability of functions.
\begin{theorem}
\label{10}
If  $0<\alpha<1$, $n \in \mathbb{N}:n^{1-\alpha}>2$, $x \in \mathbb{R}$, $f \in C^N (\mathbb{R})$, $N \in \mathbb{N}$, such that $f^{(N)} \in C_B(\mathbb{R})$, then the followings hold:
\begin{itemize}
\item[(i)] \begin{align*}
\left| ( \mathtt{B}_n (f))(x)-f(x) - \sum_{k=1}^N \frac{f^{(k)}(x)}{k!} \left(\mathtt{A}_n\left( (\cdot -x)^k \right)\right)(x)\right|
\end{align*}
\begin{equation*}
\leq \frac{\omega \left( f^{(N)}, \frac{1}{n^{\alpha}} \right)}{n^{\alpha N}N!}+\frac{2^{N+2}  \Vert f^{(N)} \Vert_{\infty}e^{\beta } \left( q+\frac{1}{q} \right)}{n^N \beta ^N } e^{ \frac{-\beta  n^{1-\alpha}}{2}}\rightarrow 0, ~as~ n\rightarrow  \infty,
\end{equation*}
\item[(ii)]
if $ f^{(k)}(x)=0$, $k=1, 2, \cdots, N$  then
\begin{equation*}
\vert \mathtt{B}_n(f)(x)-f(x)\vert \leq \frac{\omega \left( f^{(N)}, \frac{1}{n^{\alpha}} \right)}{n^{\alpha N}N!}+ \frac{2^{N+2}  \Vert f^{(N)} \Vert_{\infty}e^{\beta } \left( q+\frac{1}{q} \right)}{n^N \beta ^N } e^{ \frac{-\beta  n^{1-\alpha}}{2}},
\end{equation*}
\item[(iii)]
\begin{align*}
\vert \mathtt{B}_n(f)(x)-f(x)\vert& \leq \sum_{k=1}^N \frac{\left \vert f^{(k)}(x) \right \vert}{k!} \frac{1}{n^k} \left[  \frac{1-e^{-\beta }}{(1+e^{-\beta })}\frac{1}{k+1} +\left( q+\frac{1}{q} \right)\frac{ e^{\beta} k!}{\beta^k} \right]\\
&+  \frac{\omega \left( f^{(N)}, \frac{1}{n^{\alpha}} \right)}{n^{\alpha N}N!}+ \frac{2^{N+2}  \Vert f^{(N)} \Vert_{\infty}e^{\beta } \left( q+\frac{1}{q} \right)}{n^N \beta ^N } e^{ \frac{-\beta  n^{1-\alpha}}{2}},
\end{align*}
\item[(iv)]
\begin{align*}
\Vert \mathtt{B}_n(f)-f\Vert_{\infty}& \leq \sum_{k=1}^N \frac{\left \Vert f^{(k)} \right \Vert_{\infty}}{k!} \frac{1}{n^k} \left[  \frac{1-e^{-\beta }}{(1+e^{-\beta })}\frac{1}{k+1} +\left( q+\frac{1}{q} \right)\frac{ e^{\beta} k!}{\beta^k} \right]\\
&+  \frac{\omega \left( f^{(N)}, \frac{1}{n^{\alpha}} \right)}{n^{\alpha N}N!}+ \frac{2^{N+2}  \Vert f^{(N)} \Vert_{\infty}B^{ \beta } \left( q+\frac{1}{q} \right)}{n^N \beta ^N (\ln B)^N} B^{ \frac{-\beta  n^{1-\alpha}}{2}}.
\end{align*}
\end{itemize}
\end{theorem}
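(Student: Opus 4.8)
The plan is to work from the convolution representation $\mathtt{B}_n(f)(x)=\int_{-\infty}^{\infty} f\!\left(x-\tfrac{h}{n}\right)\Psi(h)\,dh$ already established in the proof of Theorem~\ref{theorem6}, together with $\int_{-\infty}^{\infty}\Psi(h)\,dh=1$, so that $\mathtt{B}_n(f)(x)-f(x)=\int_{-\infty}^{\infty}\big(f(x-\tfrac{h}{n})-f(x)\big)\Psi(h)\,dh$. Since $f\in C^{N}(\mathbb{R})$, I would expand by Taylor's formula at $x$ with increment $-h/n$, writing $f(x-\tfrac{h}{n})=\sum_{k=0}^{N}\frac{f^{(k)}(x)}{k!}\big(-\tfrac{h}{n}\big)^{k}+\mathcal{R}_{N}(x,h)$, where the integral form of the remainder obeys the classical bound $|\mathcal{R}_{N}(x,h)|\le\frac{|h/n|^{N}}{N!}\,\omega\!\big(f^{(N)},|h/n|\big)$. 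Integrating against $\Psi$ and noting that $\int_{-\infty}^{\infty}\big(-\tfrac{h}{n}\big)^{k}\Psi(h)\,dh=\mathtt{B}_n\big((\cdot-x)^{k}\big)(x)$ (the $k=0$ term cancelling $f(x)$) gives
\begin{multline*}
\Big|\mathtt{B}_n(f)(x)-f(x)-\sum_{k=1}^{N}\tfrac{f^{(k)}(x)}{k!}\,\mathtt{B}_n\big((\cdot-x)^{k}\big)(x)\Big|\\
\le\frac{1}{n^{N}N!}\int_{-\infty}^{\infty}|h|^{N}\,\omega\!\big(f^{(N)},\tfrac{|h|}{n}\big)\,\Psi(h)\,dh .
\end{multline*}

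Next I would split this integral over $\{|h|\le n^{1-\alpha}\}$ and $\{|h|>n^{1-\alpha}\}$, exactly as in the proof of Theorem~\ref{t1}. On the first set, monotonicity of the modulus gives $\omega(f^{(N)},|h|/n)\le\omega(f^{(N)},n^{-\alpha})$ and $|h|^{N}\le n^{(1-\alpha)N}$, so after using $\int\Psi=1$ that piece is at most $\frac{\omega(f^{(N)},n^{-\alpha})}{n^{\alpha N}N!}$. On the second set I would use $\omega(f^{(N)},|h|/n)\le 2\|f^{(N)}\|_{\infty}$ together with the exponential tail estimate $\Psi(h)<\tfrac12(q+\tfrac1q)\beta e^{-\beta(|h|-1)}$ (valid whenever $|h|\ge 1$, which holds on this set since $n^{1-\alpha}>2$), reducing that contribution, up to the factor $\frac{2\|f^{(N)}\|_{\infty}}{n^{N}N!}$, to $(q+\tfrac1q)\beta e^{\beta}\int_{n^{1-\alpha}}^{\infty}h^{N}e^{-\beta h}\,dh$.

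The main computational point — and essentially the only place where care is needed — is this tail integral. I would write $e^{-\beta h}=e^{-\beta h/2}\,e^{-\beta h/2}\le e^{-\beta n^{1-\alpha}/2}\,e^{-\beta h/2}$ on the interval of integration, then extend the range to $[0,\infty)$ and use the Gamma integral $\int_{0}^{\infty}h^{N}e^{-\beta h/2}\,dh=2^{N+1}N!/\beta^{N+1}$. Collecting constants yields exactly $\frac{2^{N+2}\|f^{(N)}\|_{\infty}e^{\beta}(q+\frac1q)}{n^{N}\beta^{N}}\,e^{-\beta n^{1-\alpha}/2}$ for the second piece, which proves (i); the convergence to $0$ is clear since $n^{-\alpha N}\to0$ (with $\omega(f^{(N)},\cdot)$ bounded by $2\|f^{(N)}\|_{\infty}$) and $e^{-\beta n^{1-\alpha}/2}\to0$. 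Part (ii) is immediate from (i) because the stated hypotheses make the correction sum vanish identically. For (iii) I would insert into (i) the moment bound $\big|\mathtt{B}_n\big((\cdot-x)^{k}\big)(x)\big|=\frac{1}{n^{k}}\big|\int_{-\infty}^{\infty}h^{k}\Psi(h)\,dh\big|\le\frac{1}{n^{k}}\int_{-\infty}^{\infty}|h|^{k}\Psi(h)\,dh$, which by the Proposition above is at most $\frac{1}{n^{k}}\big[\frac{1-e^{-\beta}}{1+e^{-\beta}}\frac{1}{k+1}+(q+\frac1q)\frac{e^{\beta}k!}{\beta^{k}}\big]$, and then apply the triangle inequality; (iv) follows by taking the supremum over $x\in\mathbb{R}$ and recalling $\|f^{(k)}\|_{\infty}<\infty$. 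The only genuine obstacle is bookkeeping the powers of $2$, of $\beta$, and the $e^{\beta}$ factor in the tail estimate so that the constants come out precisely as in the statement; everything else is a direct assembly of Theorem~\ref{t1}, the moment Proposition, and the convolution identities already derived.
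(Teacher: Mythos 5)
Your proposal is correct and follows essentially the same route as the paper: Taylor expansion at $x$ with the integral-form remainder, a near/far split at $|h|=n^{1-\alpha}$ handled by the modulus of continuity on the near part and the exponential tail bound $\Psi(h)<\tfrac12\left(q+\tfrac1q\right)\beta e^{-\beta(|h|-1)}$ on the far part, and the moment Proposition for (iii)--(iv). The only cosmetic difference is that you evaluate the tail integral by completing it to the Gamma integral $\int_0^\infty h^N e^{-\beta h/2}\,dh=2^{N+1}N!/\beta^{N+1}$, whereas the paper inserts the inequality $t^N\le 2^N N!\,e^{t/2}$ into the integrand; both yield the identical constant $2^{N+2}$.
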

\begin{proof}
Since it is already known that 
\begin{equation*}
f \left(  \frac{v}{n}\right)=\sum_{k=0}^{N} \frac{f^{(k)} (x)}{k!} \left( \frac{v}{n}-x \right)^k+\int_x^{\frac{v}{n}} \left( f^{(N)}(t)-f^{(N)}(x)\right)\frac{ \left(\frac{v}{n}-t \right)^{N-1}}{(N-1)!}dt
\end{equation*}
and 
\begin{align*}
f \left(  \frac{v}{n}\right) \Psi(nx-v)& =\sum_{k=0}^{N} \frac{f^{(k)} (x)}{k!}  \Psi(nx-v) \left( \frac{v}{n}-x \right)^k\\
&+ \Psi(nx-v)\int_x^{\frac{v}{n}} \left( f^{(N)}(t)-f^{(N)}(x)\right)\frac{ \left(\frac{v}{n}-t \right)^{N-1}}{(N-1)!}dt,
\end{align*}
 we can write that
\begin{align*}
\mathtt{B}_n(f)(x)&=\int_{-\infty}^{\infty} f \left(  \frac{v}{n}\right) \Psi(nx-v)dv \\
&=\sum_{k=0}^{N} \frac{f^{(k)} (x)}{k!} \int_{-\infty}^{\infty} \Psi(nx-v) \left( \frac{v}{n}-x \right)^k dv\\
&+\int_{-\infty}^{\infty} \Psi(nx-v) \left( \int_x^{\frac{v}{n}} \left( f^{(N)}(t)-f^{(N)}(x)\right)\frac{ \left(\frac{v}{n}-t \right)^{N-1}}{(N-1)!}dt\right) dv
\end{align*}
and
\begin{align*}
 \mathtt{B}_n(f)(x)-f(x)&=\sum_{k=1}^{N} \frac{f^{(k)} (x)}{k!} \int_{-\infty}^{\infty} \Psi(nx-v) \left( \frac{v}{n}-x \right)^k dv\\
&+\int_{-\infty}^{\infty} \Psi(nx-v) \left( \int_x^{\frac{v}{n}} \left( f^{(N)}(t)-f^{(N)}(x)\right)\frac{ \left(\frac{v}{n}-t \right)^{N-1}}{(N-1)!}dt\right) dv.
\end{align*}
Let 
\begin{equation*}
R_n(x):=\int_{-\infty}^{\infty} \Psi(nx-v) \left( \int_x^{\frac{v}{n}} \left( f^{(N)}(t)-f^{(N)}(x)\right)\frac{ \left(\frac{v}{n}-t \right)^{N-1}}{(N-1)!}dt\right) dv
\end{equation*}
and 
\begin{equation*}
\gamma(v):= \int_x^{\frac{v}{n}} \left( f^{(N)}(t)-f^{(N)}(x)\right)\frac{ \left(\frac{v}{n}-t \right)^{N-1}}{(N-1)!}dt.
\end{equation*}
Now, we have two cases: \begin{itemize} \item[1.] 
$  \left \vert \frac{v}{n}-x  \right \vert< \frac{1}{n^\alpha},$\\
\item[2.] $  \left \vert \frac{v}{n}-x  \right \vert \geq \frac{1}{n^\alpha}.$
\end{itemize} 
Let us start with Case 1.\\
Case 1 (i): When $ \frac{v}{n}\geq x$ we have 
\begin{align*}
|\gamma(v)|&
%\int_x^{\frac{v}{n}} \left\vert f^{(N)}(t)-f^{(N)}(x)\right \vert \frac{ \left(\frac{v}{n}-t \right)^{N-1}}{(N-1)!}dt\\
 \leq \omega \left( f^{(N)}, \frac{1}{n^{\alpha}} \right) \left(\int_x^{\frac{v}{n}} \frac{ \left(\frac{v}{n}-t \right)^{N-1}}{(N-1)!}dt\right) \\
%& \omega \left( f^{(N)}, \frac{1}{n^{\alpha}} \right) \frac{ \left(\frac{v}{n}-t \right)^{N}}{N!}\bigg\vert_x^{\frac{v}{n}}\\
&=\omega \left( f^{(N)}, \frac{1}{n^{\alpha}} \right) \frac{ \left(\frac{v}{n}-x\right)^{N}}{N!}\leq \omega \left( f^{(N)}, \frac{1}{n^{\alpha}} \right) \frac{1}{n^{\alpha N}N!}.
\end{align*}
Case 1 (ii): When $ \frac{v}{n} < x$, we have  
\smallskip
\begin{align*}
|\gamma(v)|& \leq
%\left\vert \int_x^{\frac{v}{n}} f^{(N)}(t)-f^{(N)}(x) \frac{ \left(\frac{v}{n}-t \right)^{N-1}}{(N-1)!}dt\right \vert\\
%& \leq \int_x^{\frac{v}{n}} \left\vert f^{(N)}(t)-f^{(N)}(x)\right \vert \frac{ \left(t-\frac{v}{n} \right)^{N-1}}{(N-1)!}%dt\\
% & \leq \omega \left( f^{(N)}, \frac{1}{n^{\alpha}} \right) \left(\int_{\frac{v}{n}}^x \frac{ t- \left(\frac{v}{n} \right)^{N-1}}{(N-1)!}dt\right) \\
\omega \left( f^{(N)}, \frac{1}{n^{\alpha}} \right) \frac{ \left(x-\frac{v}{n}\right)^{N}}{N!}\leq \omega \left( f^{(N)}, \frac{1}{n^{\alpha}} \right) \frac{ 1}{n^{\alpha N}N!}.
\end{align*}
Therefore $ |\gamma(v)| \leq \omega \left( f^{(N)}, \frac{1}{n^{\alpha}} \right) \frac{ 1}{n^{\alpha N}N!}$ holds whenever  $  \left \vert \frac{v}{n}-x  \right \vert< \frac{1}{n^\alpha} $.
Then we have that
\begin{equation*}
\left \vert \int_{ \left \vert \frac{v}{n}-x  \right \vert< \frac{1}{n^\alpha} } \Psi(nx-v) \left( \int_x^{\frac{v}{n}} \left( f^{(N)}(t)-f^{(N)}(x)\right)\frac{ \left(\frac{v}{n}-t \right)^{N-1}}{(N-1)!}dt\right) dv \right \vert 
\end{equation*}
\begin{equation*}
\leq \int_{ \left \vert \frac{v}{n}-x  \right \vert< \frac{1}{n^\alpha} } \Psi(nx-v)  |\gamma(v)| dv \leq  \omega \left( f^{(N)}, \frac{1}{n^{\alpha}} \right) \frac{ 1}{n^{\alpha N}N!}.
\end{equation*}
Case 2: In this case, we can write that
\begin{align*}
&\left \vert \int_{ \left \vert \frac{v}{n}-x  \right \vert\geq \frac{1}{n^\alpha} } \Psi(nx-v) \left( \int_x^{\frac{v}{n}} \left( f^{(N)}(t)-f^{(N)}(x)\right)\frac{ \left(\frac{v}{n}-t \right)^{N-1}}{(N-1)!}dt\right) dv \right \vert \\
&\leq  \int_{ \left \vert \frac{v}{n}-x  \right \vert\geq \frac{1}{n^\alpha} } \Psi(nx-v)\left \vert \int_x^{\frac{v}{n}} \left( f^{(N)}(t)-f^{(N)}(x)\right)\frac{ \left(\frac{v}{n}-t \right)^{N-1}}{(N-1)!}dt\right \vert dv \\
& =\int_{ \left \vert \frac{v}{n}-x  \right \vert\geq \frac{1}{n^\alpha} } \Psi(nx-v) |\gamma(v)|:= \mathtt{P}.
\end{align*}
If $\frac{v}{n} \geq x$ then $ |\gamma(v)|\leq 2\left \Vert f^{(N)} \right \Vert_{\infty} \frac{ \left(\frac{v}{n}-x \right)^{N}}{N!}  $ and 
if  $\frac{v}{n} < x$ then $ |\gamma(v)|\leq 2\left \Vert f^{(N)} \right \Vert_{\infty} \frac{ \left(x-\frac{v}{n} \right)^{N}}{N!}  $. So by combining them, we can also write  $ |\gamma(v)|\leq 2\left \Vert f^{(N)} \right \Vert_{\infty} \frac{ \left \vert \frac{v}{n}-x \right \vert^{N}}{N!} $
and by using Theorem \ref{t1} that
\begin{align*}
\mathtt{P}&\leq   \frac{2\left \Vert f^{(N)} \right \Vert_{\infty}}{N!}  \int_{ \left \vert \frac{v}{n}-x  \right \vert\geq \frac{1}{n^\alpha} } \Psi(nx-v) \left \vert x-\frac{v}{n} \right \vert^N dv\\
&= \frac{2\left \Vert f^{(N)} \right \Vert_{\infty}}{n^N N!}  \int_{ \left \vert nx-v \right \vert \geq n^{1-\alpha }} \Psi(|nx-v|) \left \vert nx-v \right \vert^N dv\\
&\leq \frac{2\left \Vert f^{(N)} \right \Vert_{\infty}}{n^N N!} \left(q +\frac{1}{q} \right)\beta   \int_{n^{1-\alpha}}^{\infty} e^{-\beta (x-1)} x^N dx\\
&= \frac{2\left \Vert f^{(N)} \right \Vert_{\infty}}{n^N N!} \left(q +\frac{1}{q} \right) \frac{ e^\beta }{\beta ^N} \int_{n^{1-\alpha}}^{\infty} e^{-\beta  x} (\beta  x)^N (\beta  dx)\\
& =\frac{2\left \Vert f^{(N)} \right \Vert_{\infty}}{n^N N!} \left(q +\frac{1}{q} \right) \frac{e^\beta }{\beta^N}  \int_{\beta  n^{1-\alpha}}^{\infty} e^{-t} t^N dt\\
&\leq \frac{2\left \Vert f^{(N)} \right \Vert_{\infty}}{n^N N!} \left(q +\frac{1}{q} \right) \frac{ e^\beta 2^N N!}{\beta ^N }  \int_{\beta  n^{1-\alpha}}^{\infty} e^{-t} e^{\frac{t}{2}}dt\\
&=\frac{2^{N+1}\left \Vert f^{(N)} \right \Vert_{\infty} \left(q +\frac{1}{q} \right)}{n^N } \frac{ e^\beta }{\beta ^N } \int_{\beta  n^{1-\alpha}}^{\infty}  e^{-\frac{t}{2}} dt\\
&=\frac{2^{N+1}\left \Vert f^{(N)} \right \Vert_{\infty} \left(q +\frac{1}{q} \right)}{n^N}  \frac{ e^\beta }{\beta ^N } (-2)e^{-\frac{t}{2}} \bigg\vert^{\infty}_{\beta  n^{1-\alpha}}\\
&=\frac{2^{N+2}  \Vert f^{(N)} \Vert_{\infty}e^{\beta } \left( q+\frac{1}{q} \right)}{n^N \beta ^N } e^{ \frac{-\beta  n^{1-\alpha}}{2}}.
\end{align*}
Then we get that  
\begin{equation*}
\left \vert \int_{ \left \vert \frac{v}{n}-x  \right \vert\geq \frac{1}{n^\alpha} } \Psi(nx-v) \left( \int_x^{\frac{v}{n}} \left( f^{(N)}(t)-f^{(N)}(x)\right)\frac{ \left(\frac{v}{n}-t \right)^{N-1}}{(N-1)!}dt\right) dv \right \vert
\end{equation*}
\begin{equation*}
\leq \frac{2^{N+2}  \Vert f^{(N)} \Vert_{\infty}e^{\beta } \left( q+\frac{1}{q} \right)}{n^N \beta ^N } e^{ \frac{-\beta  n^{1-\alpha}}{2}}\rightarrow0,~n\rightarrow \infty.
\end{equation*}
Finally we conclude that, 
\begin{align*}
\left \vert R_n(x) \right \vert &\leq \left \vert \int_{ \left \vert \frac{v}{n}-x  \right \vert< \frac{1}{n^\alpha} } \Psi(nx-v) \left( \int_x^{\frac{v}{n}} \left( f^{(N)}(t)-f^{(N)}(x)\right)\frac{ \left(\frac{v}{n}-t \right)^{N-1}}{(N-1)!}dt\right) dv \right \vert \\
&+\left \vert \int_{ \left \vert \frac{v}{n}-x  \right \vert\geq \frac{1}{n^\alpha} } \Psi(nx-v) \left( \int_x^{\frac{v}{n}} \left( f^{(N)}(t)-f^{(N)}(x)\right)\frac{ \left(\frac{v}{n}-t \right)^{N-1}}{(N-1)!}dt\right) dv \right \vert \\
&\leq \omega \left( f^{(N)}, \frac{1}{n^{\alpha}} \right) \frac{ 1}{n^{\alpha N}N!}+ \frac{2^{N+2}  \Vert f^{(N)} \Vert_{\infty}e^{\beta } \left( q+\frac{1}{q} \right)}{n^N \beta ^N } e^{ \frac{-\beta  n^{1-\alpha}}{2}}\rightarrow0,~n\rightarrow \infty.
\end{align*}
Now letting $k=1, 2, \cdots, N$ and $h=nx-v$ then we obtain that
\begin{align*}
\left \vert \mathtt{B}_n \left( ( \cdot-x)^k\right)(x) \right \vert& =\left \vert  \int_{-\infty}^{\infty} \Psi(nx-v) \left( \frac{v}{n}-x \right)^k dv \right \vert\\
& \leq \int_{-\infty}^{\infty}  \left \vert \frac{v}{n}-x  \right \vert^k \Psi(nx-v)dv\\
& =\frac{1}{n^k} \int_{-\infty}^{\infty}  \left \vert nx-v \right \vert^k \Psi(nx-v)dv=\frac{1}{n^k} \int_{-\infty}^{\infty}  \left \vert h \right \vert^k \Psi(h)dh\\
&\leq \frac{1}{n^k} \left[  \frac{1-e^{-\beta }}{(1+e^{-\beta })}\frac{1}{k+1} +\left( q+\frac{1}{q} \right)\frac{ e^{\beta} k!}{\beta^k} \right]\rightarrow 0, ~as ~n\rightarrow \infty.
\end{align*}
Hence we complete the proof. 
\end{proof}
\begin{theorem}
\label{11}
If $0<\alpha<1$, $n \in \mathbb{N}:n^{1-\alpha}>2$, $x \in \mathbb{R}$, $f \in C^{N}(\mathbb{R})$, $N \in \mathbb{N}$ such that $f^{(N)} \in C_B(\mathbb{R})$, then the  followings hold:
\begin{itemize}
\item[(i)]
\begin{align*}
\left| \mathtt{B}^*_n (f)(x)-f(x) - \sum_{k=1}^N \frac{f^{(k)}(x)}{k!} \left(\mathtt{A}^*_n\left( (\cdot -x)^k \right) \right)(x)\right|
\end{align*}
\begin{align*}
&\leq  \omega \left(f^{(N)}, \frac{1}{n}+\frac{1}{n^{\alpha}}\right) \frac{\left(  \frac{1}{n}+\frac{1}{n^{\alpha}}  \right)^N}{N!}\\
&+ \frac{ 2^N\left \Vert f^{(N)} \right \Vert_{\infty}}{n^N N!} \left( q+\frac{1}{q} \right)e^{\beta } e^{-\frac{\beta  n^{1-\alpha}}{2}} \left[1 +  \frac{ 2^{N+1} N!}{\beta ^{N}}  \right]\rightarrow 0,~n\rightarrow \infty,
\end{align*}
\item[(ii)]
if $ f^{(k)}(x)=0$, $k=1, 2, \cdots, N$ then 
\begin{align*}
\vert \mathtt{B}^*_n(f)(x)-f(x)\vert &\leq  \omega \left(f^{(N)}, \frac{1}{n}+\frac{1}{n^{\alpha}}\right) \frac{\left(   \frac{1}{n}+\frac{1}{n^{\alpha}} \right)^N}{ N!}\\
&+ \frac{ 2^N\left \Vert f^{(N)} \right \Vert_{\infty}}{n^N N!} \left( q+\frac{1}{q} \right)e^{\beta } e^{-\frac{\beta  n^{1-\alpha}}{2}} \left[1 +  \frac{ 2^{N+1} N!}{\beta ^{N}}  \right],
\end{align*}
\item[(iii)]
\begin{align*}
\vert \mathtt{B}^*_n(f)(x)-f(x)\vert& \leq \sum_{k=1}^N \frac{\left \vert f^{(k)}(x)\right \vert}{k!} \frac{2^{k-1}}{n^k} \left[1+   \frac{1-e^{-\beta }}{(1+e^{-\beta })}\frac{1}{k+1} +\left( q+\frac{1}{q} \right)\frac{ e^{\beta} k!}{\beta^k} \right] \\
&+  \omega \left(f^{(N)}, \frac{1}{n}+\frac{1}{n^{\alpha}}\right) \frac{\left(   \frac{1}{n}+\frac{1}{n^{\alpha}} \right)^N}{N!}\\
&+\frac{ 2^N\left \Vert f^{(N)} \right \Vert_{\infty}}{n^N N!} \left( q+\frac{1}{q} \right)e^{\beta } e^{-\frac{\beta  n^{1-\alpha}}{2}} \left[1 +  \frac{ 2^{N+1} N!}{\beta ^{N}}  \right],
\end{align*}
\item[(iv)]
\begin{align*}
\Vert \mathtt{B}^*_n(f)-f\Vert_{\infty}& \leq \sum_{k=1}^N \frac{\left\Vert f^{(k)}\right \Vert_{\infty}}{k!} \frac{2^{k-1}}{n^k} \left[1+  \frac{1-e^{-\beta }}{(1+e^{-\beta })}\frac{1}{k+1} +\left( q+\frac{1}{q} \right)\frac{ e^{\beta} k!}{\beta^k} \right] \\
&+ \omega \left(f^{(N)}, \frac{1}{n}+\frac{1}{n^{\alpha}}\right) \frac{\left(   \frac{1}{n}+\frac{1}{n^{\alpha}} \right)^N}{N!}\\
&+\frac{ 2^N\left \Vert f^{(N)} \right \Vert_{\infty}}{n^N N!} \left( q+\frac{1}{q} \right)e^{\beta } e^{-\frac{\beta  n^{1-\alpha}}{2}} \left[1 +  \frac{ 2^{N+1} N!}{\beta ^{N}}  \right].
\end{align*}
\end{itemize}
\end{theorem}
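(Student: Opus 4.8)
The plan is to run the argument of Theorem \ref{10} with the plain convolution replaced by the Kantorovich averaging. Writing
\[
\mathtt{B}_n^*(f)(x)=n\int_{-\infty}^{\infty}\left(\int_0^{1/n} f\left(h+\tfrac{v}{n}\right)dh\right)\Psi(nx-v)\,dv,
\]
I would apply Taylor's formula with integral remainder of order $N$ to $f\left(h+\tfrac{v}{n}\right)$ about $x$, namely
\[
f\left(h+\tfrac{v}{n}\right)=\sum_{k=0}^{N}\frac{f^{(k)}(x)}{k!}\left(h+\tfrac{v}{n}-x\right)^{k}+\int_{x}^{h+\frac{v}{n}}\left(f^{(N)}(u)-f^{(N)}(x)\right)\frac{\left(h+\frac{v}{n}-u\right)^{N-1}}{(N-1)!}\,du,
\]
multiply by $\Psi(nx-v)$, and apply $n\int_0^{1/n}(\cdot)\,dh\int_{-\infty}^{\infty}(\cdot)\,dv$. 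Since $\mathtt{B}_n^*(1)=1$, this gives $\mathtt{B}_n^*(f)(x)=f(x)+\sum_{k=1}^{N}\frac{f^{(k)}(x)}{k!}\,\mathtt{B}_n^*\!\left((\cdot-x)^k\right)(x)+R_n^*(x)$ (the polynomial moments $\mathtt{B}_n^*((\cdot-x)^k)(x)$ are what is denoted $\mathtt{A}_n^*$ in the statement), where $R_n^*(x)$ is the integral of the remainder term; so the quantity on the left of (i) is exactly $|R_n^*(x)|$.

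To bound $|R_n^*(x)|$ I would split the $v$-integral at $|v/n-x|=1/n^{\alpha}$, as in Theorems \ref{theorem4} and \ref{10}. On $F_1=\{|v/n-x|<1/n^{\alpha}\}$: since $u$ lies between $x$ and $h+v/n$ we have $|u-x|\le|h+v/n-x|\le\frac1n+\frac1{n^{\alpha}}$, hence $|f^{(N)}(u)-f^{(N)}(x)|\le\omega(f^{(N)},\frac1n+\frac1{n^{\alpha}})$ and the remainder is $\le\omega(f^{(N)},\frac1n+\frac1{n^{\alpha}})\frac{(1/n+1/n^{\alpha})^{N}}{N!}$; integrating against $n\int_0^{1/n}dh$ (mass $1$) and $\Psi$ (mass $\le1$) preserves this bound. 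On $F_2=\{|v/n-x|\ge 1/n^{\alpha}\}$: using $|f^{(N)}(u)-f^{(N)}(x)|\le 2\Vert f^{(N)}\Vert_\infty$, the remainder is $\le\frac{2\Vert f^{(N)}\Vert_\infty}{N!}|h+v/n-x|^{N}\le\frac{2^{N}\Vert f^{(N)}\Vert_\infty}{N!}\left(|h|^{N}+|v/n-x|^{N}\right)$ after $(a+b)^N\le 2^{N-1}(a^N+b^N)$. The $|h|^{N}$-part integrates over $h\in[0,1/n]$ to $\le\frac{1}{(N+1)n^{N}}$ and, combined with $\int_{F_2}\Psi\,dv<(q+\tfrac1q)e^{\beta}e^{-\beta n^{1-\alpha}}$ from Theorem \ref{t1} and $e^{-\beta n^{1-\alpha}}\le e^{-\beta n^{1-\alpha}/2}$, yields the "$1$"-summand of the stated bound. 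The $|v/n-x|^{N}$-part equals, after the substitution $h=nx-v$, the tail $\frac{1}{n^{N}}\int_{|h|\ge n^{1-\alpha}}|h|^{N}\Psi(|h|)\,dh$, which is handled exactly as in Theorem \ref{10}: bound $\Psi$ by $\frac12(q+\tfrac1q)\beta e^{-\beta(x-1)}$ on $[1,\infty)$, pass to an incomplete Gamma integral, and use $t^{N}\le 2^{N}N!\,e^{t/2}$ to get $\le\frac{2^{N+1}e^{\beta}(q+\frac1q)N!}{\beta^{N}}e^{-\beta n^{1-\alpha}/2}$, producing the $\frac{2^{N+1}N!}{\beta^{N}}$-summand. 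Adding the two regions gives (i), and both error terms tend to $0$ (the first is $O(n^{-\alpha N})$ since $\omega(f^{(N)},\cdot)$ is bounded, the second is exponentially small). Part (ii) is immediate: setting $f^{(k)}(x)=0$ for $k=1,\dots,N$ annihilates the sum, leaving $|\mathtt{B}_n^*(f)(x)-f(x)|=|R_n^*(x)|$.

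For (iii) I also need the moment estimates. Bounding $|h+v/n-x|^{k}\le 2^{k-1}(|h|^{k}+|v/n-x|^{k})$, the $|h|^{k}$-term contributes $\frac{1}{(k+1)n^{k}}\le\frac1{n^k}$ after $n\int_0^{1/n}dh$, while the $|v/n-x|^{k}$-term gives $\frac{1}{n^{k}}\int_{-\infty}^{\infty}|h|^{k}\Psi(h)\,dh$, which by the Proposition is $\le\frac1{n^k}\big[\frac{1-e^{-\beta}}{1+e^{-\beta}}\frac1{k+1}+(q+\tfrac1q)\frac{e^{\beta}k!}{\beta^{k}}\big]$; hence $|\mathtt{B}_n^*((\cdot-x)^k)(x)|\le\frac{2^{k-1}}{n^{k}}\big[1+\frac{1-e^{-\beta}}{1+e^{-\beta}}\frac1{k+1}+(q+\tfrac1q)\frac{e^{\beta}k!}{\beta^{k}}\big]$. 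Inserting this into (i) via the triangle inequality gives (iii), and taking $\sup_{x\in\mathbb{R}}$ (all bounds being independent of $x$, with $|f^{(k)}(x)|$ replaced by $\Vert f^{(k)}\Vert_\infty$) gives (iv). The only genuinely delicate point is the $F_2$-bookkeeping: one must notice that splitting $|h+v/n-x|^{N}$ produces two distinct contributions — the $|h|^{N}$ one controlled by Theorem \ref{t1} and the $|v/n-x|^{N}$ one by the sharper exponential tail — matching the two terms in the bracket $\big[1+2^{N+1}N!/\beta^{N}\big]$; everything else is routine.
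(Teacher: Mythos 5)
Your proposal is correct and follows essentially the same route as the paper's proof: Taylor's formula with integral remainder applied to $f(t+\tfrac{v}{n})$ about $x$, a split of the $v$-integral at $|\tfrac{v}{n}-x|=\tfrac{1}{n^{\alpha}}$ with the modulus of continuity on the near region and the exponential kernel tail (via the estimate $\Psi(x)<\tfrac12(q+\tfrac1q)\beta e^{-\beta(x-1)}$ and $t^N\le 2^N N!\,e^{t/2}$) on the far region, and the moment bound $|\mathtt{A}^*_n((\cdot-x)^k)(x)|\le \tfrac{2^{k-1}}{n^k}[1+\cdots]$ from the Proposition for parts (iii)--(iv). The only deviation is cosmetic bookkeeping in Case 2 (you split $|t|^N+|\tfrac{v}{n}-x|^N$ and invoke Theorem \ref{t1} for the constant term, while the paper integrates $e^{-\beta x}(1+x^N)$ directly), and both yield the same bracket $\bigl[1+\tfrac{2^{N+1}N!}{\beta^N}\bigr]$.
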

\begin{proof}
Since it is already known that
\begin{equation*}
f \left(  t+\frac{v}{n}\right)=\sum_{k=0}^{N} \frac{f^{(k)} (x)}{k!} \left( t+\frac{v}{n}-x \right)^k
+\int_x^{t+\frac{v}{n}} \left( f^{(N)}(s)-f^{(N)}(x)\right)\frac{ \left(t+\frac{v}{n}-s \right)^{N-1}}{(N-1)!}ds
\end{equation*}
and
\begin{align*}
\int_{0}^{\frac{1}{n}}f \left(  t+\frac{v}{n}\right)dt& =\sum_{k=0}^{N} \frac{f^{(k)} (x)}{k!} \int_{0}^{\frac{1}{n}} \left( t+\frac{v}{n}-x \right)^kdt+\int_{0}^{\frac{1}{n}}\left(\int_x^{t+\frac{v}{n}} \left( f^{(N)}(s)-f^{(N)}(x)\right)\frac{ \left(t+\frac{v}{n}-s \right)^{N-1}}{(N-1)!}ds\right)dt,
\end{align*}
we can write that 
\begin{align*}
&n \int_{-\infty}^{\infty} \left( \int_{0}^{\frac{1}{n}}f \left( t+ \frac{v}{n}\right)dt \right) \Psi(nx-v)dv\\
&=\sum_{k=0}^{N} \frac{f^{(k)} (x)}{k!} n \int_{-\infty}^{\infty} \left( \int_{0}^{\frac{1}{n}}f \left( t+ \frac{v}{n}-x \right)^k dt \right) \Psi(nx-v)dv\\
&+n \int_{-\infty}^{\infty}  \left( \int_{0}^{\frac{1}{n}} \left(  \int_x^{t+\frac{v}{n}} \left( f^{(N)}(s)-f^{(N)}(x)\right)\frac{ \left(t+\frac{v}{n}-s \right)^{N-1}}{(N-1)!}ds\right)dt \right)\Psi(nx-v)dv
\end{align*}
and 
\begin{align*}
 \mathtt{B}^*_n(f)(x)-f(x)&=\sum_{k=1}^{N} \frac{f^{(k)} (x)}{k!} \mathtt{A}^*_n \left( (\cdot -x)^k \right)(x)+R_n(x).
\end{align*}
Here, let
\begin{equation*}
R_n(x):= n \int_{-\infty}^{\infty}  \left( \int_{0}^{\frac{1}{n}} \left(  \int_x^{t+\frac{v}{n}} \left( f^{(N)}(s)-f^{(N)}(x)\right)\frac{ \left(t+\frac{v}{n}-s \right)^{N-1}}{(N-1)!}ds\right)dt\right) \Psi(nx-v)dv
\end{equation*}
and 
\begin{equation*}
\gamma(v):= n\int_{0}^{\frac{1}{n}} \left(  \int_x^{t+\frac{v}{n}} \left( f^{(N)}(s)-f^{(N)}(x)\right)\frac{ \left(t+\frac{v}{n}-s \right)^{N-1}}{(N-1)!}ds\right)dt.
\end{equation*}
Now we have two cases : \begin{itemize}
\item[1.] $ \left \vert \frac{v}{n}-x  \right \vert < \frac{1}{n^\alpha}, $
\item[2.] $ \left \vert \frac{v}{n}-x  \right \vert \geq \frac{1}{n^\alpha}. $
\end{itemize} 
Let us start with Case 1. 
\begin{itemize}
\item[Case 1 (i):]
 When $ t+\frac{v}{n} \geq x $, we have
\begin{align*}
|\gamma(v)| &\leq n \int_{0}^{\frac{1}{n}} \left(  \int_x^{t+\frac{v}{n}} \left\vert f^{(N)}(s)-f^{(N)}(x)\right \vert \frac{ \left( t+\frac{v}{n}-s \right)^{N-1}}{(N-1)!}ds \right)dt \\
& \leq n \int_{0}^{ \frac{1}{n}} \omega \left( f^{(N)}, |t|+\left \vert \frac{v}{n}-x \right \vert \right) \left(  \int_x^{t+\frac{v}{n}}  \frac{ \left(t+\frac{v}{n}-s \right)^{N-1}}{(N-1)!}ds \right)dt\\
&\leq \omega \left( f^{(N)}, \frac{1}{n}+\frac{1}{n^{\alpha}} \right) n  \int_{0}^{\frac{1}{n}}  \frac{\left( |t|+\left \vert \frac{v}{n}-x \right \vert \right)^N }{N!} dt \\
&\leq \frac{\omega \left(f^{(N)}, \frac{1}{n}+\frac{1}{n^{\alpha}}\right)}{N!} \left( \frac{1}{n}+\frac{1}{n^{\alpha}}\right)^N.
\end{align*}
%so we have that 
%$|\gamma(u)| \leq  \omega \left(f^{(N)}, \frac{1}{n}+\frac{1}{n^{\alpha}} \right) \frac{\left( \frac{1}{n}+\frac{1}{n^{\alpha}}\right)^N}{N!}.$
\item[Case 1 (ii):]
When $ t+\frac{v}{n} < x $, we have
\begin{equation*}
|\gamma(v)|= n \left \vert \int_{0}^{\frac{1}{n}} \left(  \int_x^{t+\frac{v}{n}} \left( f^{(N)}(s)-f^{(N)}(x)\right) \frac{ \left( \left(t+\frac{v}{n} \right)-s\right)^{N-1}}{(N-1)!}ds\right)dt \right \vert
\end{equation*}
\begin{align*}
&\leq n\int_{0}^{\frac{1}{n}} \left(  \int^x_{t+\frac{v}{n}} \left\vert f^{(N)}(s)-f^{(N)}(x)\right \vert \frac{ \left(s- \left(t+\frac{v}{n} \right)\right)^{N-1}}{(N-1)!}ds\right)dt\\
&\leq n \int_{0}^{\frac{1}{n}} \omega \left( f^{(N)}, |t|+\left \vert \frac{v}{n}-x \right \vert \right)\left(  \int^x_{t+\frac{v}{n}} \frac{ \left(s- \left(t+\frac{v}{n} \right)\right)^{N-1}}{(N-1)!}ds\right)dt\\
&\leq \omega \left( f^{(N)}, \frac{1}{n}+\frac{1}{n^{\alpha}} \right) n  \int_{0}^{\frac{1}{n}} \frac{ \left(x- \left(t+\frac{v}{n} \right)\right)^{N}}{N!}dt\\
& \leq \omega \left( f^{(N)}, \frac{1}{n}+\frac{1}{n^{\alpha}} \right) n  \int_{0}^{\frac{1}{n}} \frac{ \left(\frac{1}{n}+\frac{1}{n^{\alpha}} \right)^{N}}{N!}dt \\
&=\omega \left( f^{(N)}, \frac{1}{n}+\frac{1}{n^{\alpha}} \right)    \frac{ \left(\frac{1}{n}+\frac{1}{n^{\alpha}} \right)^{N}}{N!}.
\end{align*}
\end{itemize}
By considering the above inequalities, we find that
\begin{equation*}
|\gamma(v)|\leq \omega \left( f^{(N)}, \frac{1}{n}+\frac{1}{n^{\alpha}} \right)    \frac{ \left(\frac{1}{n}+\frac{1}{n^{\alpha}} \right)^{N}}{N!}.
\end{equation*}
Then we conclude that 
\begin{equation*}
\left \vert n \int^{\infty}_{\substack{-\infty \\ \left \vert \frac{v}{n}-x \right \vert<\frac{1}{n^{\alpha}}}}\left( \int_{0}^{\frac{1}{n}} \left(  \int_x^{t+\frac{v}{n}} \left( f^{(N)}(s)-f^{(N)}(x)\right)\frac{ \left(t+\frac{v}{n}-s \right)^{N-1}}{(N-1)!}ds\right)dt\right) \Psi(nx-v)dv \right \vert
\end{equation*}
\begin{equation*}
\leq \omega \left( f^{(N)}, \frac{1}{n}+\frac{1}{n^{\alpha}} \right)    \frac{ \left(\frac{1}{n}+\frac{1}{n^{\alpha}} \right)^{N}}{N!}.
\end{equation*}
\begin{itemize}
\item[Case 2:]In this case,  we can write that
\end{itemize}
\begin{equation*}
\left \vert  \int_{\left \vert \frac{v}{n}-x \right \vert\geq\frac{1}{n^{\alpha}} } n \left( \int_{0}^{\frac{1}{n}} \left(  \int_x^{t+\frac{v}{n}} \left( f^{(N)}(s)-f^{(N)}(x)\right)\frac{ \left(t+\frac{v}{n}-s \right)^{N-1}}{(N-1)!}ds\right)dt\right) \Psi(nx-v)dv \right \vert
\end{equation*}
\begin{equation*}
\leq  \int_{\left \vert \frac{v}{n}-x \right \vert\geq\frac{1}{n^{\alpha}} } \left \vert n \left(  \int_{0}^{\frac{1}{n}} \left(  \int_x^{t+\frac{v}{n}} \left( f^{(N)}(s)-f^{(N)}(x)\right)\frac{ \left(t+\frac{v}{n}-s \right)^{N-1}}{(N-1)!}ds\right)d t\right)  \right\vert \Psi(nx-v)dv =:\mathtt{\Gamma} 
\end{equation*}
and
\begin{equation*}
|\gamma(v)|\leq n  \int_{0}^{\frac{1}{n}}\left \vert    \int_x^{t+\frac{v}{n}} \left( f^{(N)}(s)-f^{(N)}(x)\right) \frac{ \left(t+\frac{v}{n}-s \right)^{N-1}}{(N-1)!}ds \right \vert dt.
\end{equation*}
Now, if $t+\frac{v}{n}\geq x$, then
\begin{align*}
|\gamma(v)|\leq & 2 \left \Vert f^{(N)} \right \Vert_{\infty} n \int_0^\frac{1}{n} \frac{ \left(t+\frac{v}{n}-x \right)^{N}}{N!}dt\\
\leq& 2 \left \Vert f^{(N)} \right \Vert_{\infty} n \int_0^\frac{1}{n} \frac{ \left( |t|+\left \vert \frac{v}{n}-x \right \vert \right)^{N}}{N!}dt\\
\leq & 2 \frac{\left \Vert f^{(N)} \right \Vert_{\infty}}{N!} \left( \frac{1}{n}+\left \vert \frac{v}{n}-x \right \vert \right)^{N},
\end{align*}
and if  $t+\frac{v}{n} < x$ then $|\gamma(v)|\leq  2 \left \Vert f^{(N)} \right \Vert_{\infty} \frac{\left( \frac{1}{n}+\left \vert \frac{v}{n}-x \right \vert \right)^{N}}{N!}.$
By considering the above inequalities we conclude that 
%Also, if $t+ \frac{u}{n}<x$ then 
%\begin{align*}
%|\gamma(u)|\leq & 2 \left \Vert f^{(N)} \right \Vert_{\infty} n \int_0^\frac{1}{n} \frac{ \left(x- \left(t+\frac{u}{n} %\right) \right)^{N}}{N!}dt\\
%\leq&2 \frac{\left \Vert f^{(N)} \right \Vert_{\infty}}{N!} \left( \frac{1}{n}+\left \vert \frac{u}{n}-x \right \vert %\right)^{N}
%\end{align*}
 %we can write that 
\begin{equation*}
|\gamma(v)|\leq 2 \frac{\left \Vert f^{(N)} \right \Vert_{\infty}}{N!}  \left( \frac{1}{n}+\left \vert \frac{v}{n}-x \right \vert \right)^{N}
\end{equation*}
and by using Theorem \ref{t1} 
\begin{align*}
 \mathtt{\Gamma}&\leq \left( \int_{\left \vert \frac{v}{n}-x \right \vert\geq\frac{1}{n^{\alpha}} }  \left( \frac{1}{n}+\left \vert \frac{v}{n}-x \right \vert \right)^{N} \Psi(nx-v)dv \right)  \frac{ 2\left \Vert f^{(N)} \right \Vert_{\infty}}{N!} \\ 
& \leq \frac{ 2^N \left \Vert f^{(N)} \right \Vert_{\infty}}{N!} \int_{|nx-v|\geq n^{1-\alpha}} \left( \frac{1}{n^N}+ \frac{\left \vert nx-v\right \vert^N}{n^N}\right)\Psi(nx-v)dv\\
&\leq  \frac{2^N \left \Vert f^{(N)} \right \Vert_{\infty}}{n^N N!} \int_{|nx-v|\geq n^{1-\alpha}} \left( 1+ \left \vert nx-v\right \vert^N \right)\Psi(|nx-v|)dv\\
 &\leq \frac{ 2^N\left \Vert f^{(N)} \right \Vert_{\infty}}{n^N N!} \int_F \frac{1}{2}\left( q+\frac{1}{q} \right) \beta e^{-\beta \left(|nx-v|-1\right)}(1+|nx-v|)^N dv, ~F=\{v \in \mathbb{R}: |nx-v|\geq n^{1-\alpha} \}\\
 & \leq \frac{ 2^N\left \Vert f^{(N)} \right \Vert_{\infty}}{ n^N N!} \frac{1}{2}\left( q+\frac{1}{q} \right) \beta  \int_F e^{-\beta  \left(|nx-v|-1\right)}(1+|nx-v|)^N dv \\
 &= \frac{ 2^N\left \Vert f^{(N)} \right \Vert_{\infty}}{ n^N N!} \frac{1}{2}\left( q+\frac{1}{q} \right) \beta  2 \int_{n^{1-\alpha}}^{\infty} e^{-\beta  (x-1)} \left( 1+x^N \right)dx\\
 &= \frac{ 2^N\left \Vert f^{(N)} \right \Vert_{\infty}}{ n^N N!} \left( q+\frac{1}{q} \right)e^{\beta } \beta   \int_{n^{1-\alpha}}^{\infty} e^{-\beta  x} \left( 1+x^N \right)dx\\
 &= \frac{ 2^N\left \Vert f^{(N)} \right \Vert_{\infty}}{ n^N N!}\left( q+\frac{1}{q} \right)e^{ \beta } \beta   \left[  \int_{n^{1-\alpha}}^{\infty} e^{-\beta  x} dx+  \int_{n^{1-\alpha}}^{\infty} e^{-\beta  x} x^N dx \right]\\
 &= \frac{ 2^N\left \Vert f^{(N)} \right \Vert_{\infty}}{ n^N N!} \left( q+\frac{1}{q} \right)e^{\beta } \beta   \left[ \frac{ -e^{-\beta  x}}{ \beta } \bigg \vert_{n^{1-\alpha}}^{\infty}+  \int_{n^{1-\alpha}}^{\infty} e^{-\beta  x} x^N dx \right]\\
 &=\frac{ 2^N\left \Vert f^{(N)} \right \Vert_{\infty}}{ n^N N!} \left( q+\frac{1}{q} \right)e^{\beta } \beta   \left[ \frac{ e^{-\beta  (n^{1-\alpha})}}{ \beta }+  \int_{n^{1-\alpha}}^{\infty} e^{-\beta  x} x^N dx \right]=\mathtt{M}\\
\end{align*}
and
\begin{align*}
\int_{n^{1-\alpha}}^{\infty} e^{-\beta  x} x^N dx&=\frac{1}{\beta ^{N+1}} \int_{n^{1-\alpha}}^{\infty}  e^{-\beta  x} (\beta  x)^N (\beta  dx)\\
& =\frac{1}{\beta ^{N+1}} \int_{\beta  n^{1-\alpha}}^{\infty}  e^{-t} t^N dt\\
&  \leq \frac{1}{\beta ^{N+1}} \int_{\beta  n^{1-\alpha}}^{\infty}  e^{-t} e^{\frac{t}{2}} 2^N N!  dt\\
&=\frac{ 2^N N!}{\beta ^{N+1} } \int_{\beta  n^{1-\alpha}}^{\infty}  e^{-\frac{t}{2}} dt\\
&= \frac{ -2^{N+1} N!}{\beta ^{N+1} }  e^{-\frac{t}{2}} \bigg \vert_{\beta  n^{1-\alpha}}^{\infty}\\
&= \frac{ 2^{N+1} N!}{\beta ^{N+1}}  e^{-\frac{\beta  n^{1-\alpha}}{2}}.
\end{align*}
Then we obtain 
\begin{equation*}
\mathtt{M}\leq \frac{ 2^N\left \Vert f^{(N)} \right \Vert_{\infty}}{n^N N!} \left( q+\frac{1}{q} \right)e^{\beta } \beta \left[\frac{ e^{-\beta  (n^{1-\alpha})}}{ \beta }+ \frac{ 2^{N+1} N!}{\beta ^{N+1}}  e^{-\frac{\beta  n^{1-\alpha}}{2}} \right],
\end{equation*}
and since 
\begin{equation*}
\beta  n^{1-\alpha} >\frac{\beta  n^{1-\alpha}}{2}\Rightarrow -\beta  n^{1-\alpha}< \frac{-\beta  n^{1-\alpha}}{2}\Rightarrow  e^{-\beta  (n^{1-\alpha})} < e^{-\frac{\beta  n^{1-\alpha}}{2}}
\end{equation*}
we can easily write that 
\begin{align*}
\mathtt{M}&\leq\frac{ 2^N\left \Vert f^{(N)} \right \Vert_{\infty}}{n^N N!} \left( q+\frac{1}{q} \right)e^{\beta } \beta \left[\frac{ e^{-\beta  (n^{1-\alpha})}}{ \beta }+ \frac{ 2^{N+1} N!}{\beta ^{N+1}}  e^{-\frac{\beta  n^{1-\alpha}}{2}} \right]\\
& =\frac{ 2^N\left \Vert f^{(N)} \right \Vert_{\infty}}{n^N N!} \left( q+\frac{1}{q} \right)e^{\beta } e^{-\frac{\beta  n^{1-\alpha}}{2}} \left[1 +  \frac{ 2^{N+1} N!}{\beta ^{N}}  \right].
\end{align*}
Hence, we find 
\begin{equation*}
\left \vert  \int_{\left \vert \frac{v}{n}-x \right \vert\geq\frac{1}{n^{\alpha}} } n \left( \int_{0}^{\frac{1}{n}} \left(  \int_x^{t+\frac{v}{n}} \left( f^{(N)}(s)-f^{(N)}(x)\right)\frac{ \left(t+\frac{v}{n}-s \right)^{N-1}}{(N-1)!}ds\right)dt\right) \Psi(nx-v)dv \right \vert
\end{equation*}
\begin{equation*}
\leq \frac{ 2^N\left \Vert f^{(N)} \right \Vert_{\infty}}{n^N N!} \left( q+\frac{1}{q} \right)e^{\beta } e^{-\frac{\beta  n^{1-\alpha}}{2}} \left[1 +  \frac{ 2^{N+1} N!}{\beta ^{N}}  \right]\rightarrow 0,~ n\rightarrow \infty.
\end{equation*}
Also
\begin{align*}
\left \vert R_n (x) \right \vert &\leq \omega \left(f^{(N)}, \frac{1}{n}+\frac{1}{n^{\alpha}} \right) \frac{\left( \frac{1}{n}+\frac{1}{n^{\alpha}}\right)^N}{N!}\\
& + \frac{ 2^N\left \Vert f^{(N)} \right \Vert_{\infty}}{n^N N!} \left( q+\frac{1}{q} \right)e^{\beta } e^{-\frac{\beta  n^{1-\alpha}}{2}} \left[1 +  \frac{ 2^{N+1} N!}{\beta ^{N}}  \right]\rightarrow 0,~n\rightarrow \infty.
\end{align*}
So we notice for $k=1, 2, \cdots, N$ that 
\begin{align*}
\left \vert \mathtt{A}^*_n \left((\cdot -x)^k \right)\right \vert &= \left \vert  n \int_{-\infty}^{\infty} \left( \int_{0}^{\frac{1}{n}} \left( t+ \frac{v}{n}-x \right)^k dt \right) \Psi(nx-v)dv\right \vert\\
&\leq   n \int_{-\infty}^{\infty} \left(  \int_{0}^{\frac{1}{n}} \left \vert t+ \frac{v}{n}-x \right \vert^k dt \right) \Psi(nx-v)dv\\
& \leq n \int_{-\infty}^{\infty} \left( \int_{0}^{\frac{1}{n}} \left( | t|+ \left \vert \frac{v}{n}-x \right \vert \right)^k dt \right) \Psi(nx-v)dv\\
&\leq  \int_{-\infty}^{\infty}  \left(  \frac{1}{n}+ \left \vert \frac{v}{n}-x \right \vert \right)^k  \Psi(nx-v)dv\\
& =\frac{1}{n^k} \left[  \int_{-\infty}^{\infty}  \left( 1+|nx-v|\right)^k \Psi(nx-v)dv\right]\\
& \leq \frac{2^{k-1}}{n^k} \left[1+  \int_{-\infty}^{\infty}   |nx-v|^k \Psi(nx-v)dv\right]\\
& \leq \frac{2^{k-1}}{n^k} \left[1+  \int_{-\infty}^{\infty}   |h|^k  \Psi(h)dh\right]\\
& \leq \frac{2^{k-1}}{n^k} \left[1+  \frac{1-e^{-\beta }}{(1+e^{-\beta })}\frac{1}{k+1} +\left( q+\frac{1}{q} \right)\frac{ e^{\beta} k!}{\beta^k} \right]\rightarrow 0, ~as ~n\rightarrow \infty
\end{align*}
and we complete the proof.
\end{proof}
Our next result deals with  activated Quadrature operators.
\begin{theorem}
\label{12}
If $0<\alpha<1$, $n \in \mathbb{N}:n^{1-\alpha}>2$, $x \in \mathbb{R}$, $f \in C^{N}(\mathbb{R})$, $N \in \mathbb{N}$ with $f^{(N)} \in C_B(\mathbb{R})$. Then the followings hold: 
\begin{itemize}
\item[(i)]
\begin{align*}
\left|\overline{\mathtt{B}_n} (f)(x)-f(x) - \sum_{k=1}^N \frac{f^{(k)}(x)}{k!} \left(\overline{ \mathtt{A}_n} \left( (\cdot -x)^k \right) \right)(x)\right| 
\end{align*}
\begin{align*}
&\leq  \omega \left(f^{(N)}, \frac{1}{n}+\frac{1}{n^{\alpha}}\right) \frac{\left(  \frac{1}{n}+\frac{1}{n^{\alpha}}  \right)^N}{N!}\\
&+\frac{ 2^N\left \Vert f^{(N)} \right \Vert_{\infty}}{n^N N!} \left( q+\frac{1}{q} \right)e^{\beta } e^{-\frac{\beta  n^{1-\alpha}}{2}} \left[1 +  \frac{ 2^{N+1} N!}{\beta ^{N}}\right] \rightarrow 0,~as~ n\rightarrow \infty,
\end{align*}
\item[(ii)]
if $ f^{(k)}(x)=0$, $k=1, 2, \cdots, N$ then 
\begin{align*}
\vert \overline{\mathtt{B}_n}(f)(x)-f(x)\vert &\leq  \omega \left(f, \frac{1}{n}+\frac{1}{n^{\alpha}}\right) \frac{\left(   \frac{1}{n}+\frac{1}{n^{\alpha}} \right)^N}{N!}\\
&+\frac{ 2^N\left \Vert f^{(N)} \right \Vert_{\infty}}{n^N N!} \left( q+\frac{1}{q} \right)e^{\beta } e^{-\frac{\beta  n^{1-\alpha}}{2}} \left[1 +  \frac{ 2^{N+1} N!}{\beta ^{N}}\right],
\end{align*}
\item[(iii)]
\begin{align*}
\vert\overline{\mathtt{B}_n}(f)(x)-f(x)\vert& \leq \sum_{k=1}^N \frac{\left \vert f^{(k)}(x)\right \vert}{k!} \frac{2^{k-1}}{n^k} \left[1+ \frac{1-e^{-\beta }}{(1+e^{-\beta })}\frac{1}{k+1} +\left( q+\frac{1}{q} \right)\frac{ e^{\beta} k!}{\beta^k} \right]\\
&+  \omega \left(f^{(N)}, \frac{1}{n}+\frac{1}{n^{\alpha}}\right) \frac{\left(   \frac{1}{n}+\frac{1}{n^{\alpha}} \right)^N}{N!}\\
&+ \frac{ 2^N\left \Vert f^{(N)} \right \Vert_{\infty}}{n^N N!} \left( q+\frac{1}{q} \right)e^{\beta } e^{-\frac{\beta  n^{1-\alpha}}{2}} \left[1 +  \frac{ 2^{N+1} N!}{\beta ^{N}}\right],
\end{align*}
\item[(iv)]
\begin{align*}
\Vert\overline{\mathtt{B}_n}(f)-f \Vert_{\infty}& \leq \sum_{k=1}^N \frac{\left\Vert f^{(k)} \right \Vert_{\infty}}{k!} \frac{2^{k-1}}{n^k} \left[1+ \frac{1-e^{-\beta }}{(1+e^{-\beta })}\frac{1}{k+1} +\left( q+\frac{1}{q} \right)\frac{ e^{\beta} k!}{\beta^k}  \right]  \\
&+ \omega \left(f^{(N)}, \frac{1}{n}+\frac{1}{n^{\alpha}}\right) \frac{\left(   \frac{1}{n}+\frac{1}{n^{\alpha}} \right)^N}{N!}\\
&+\frac{ 2^N\left \Vert f^{(N)} \right \Vert_{\infty}}{n^N N!} \left( q+\frac{1}{q} \right)e^{\beta } e^{-\frac{\beta  n^{1-\alpha}}{2}} \left[1 +  \frac{ 2^{N+1} N!}{\beta ^{N}}\right].
\end{align*}
\end{itemize}
\end{theorem}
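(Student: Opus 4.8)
The plan is to run the argument of Theorem~\ref{11} essentially verbatim, with the Kantorovich average $n\int_0^{1/n}f(t+\tfrac{v}{n})\,dt$ replaced by the finite convex combination $\sum_{s=1}^{r}w_s f(\tfrac{v}{n}+\tfrac{s}{nr})$. The only structural observation needed is that for $1\le s\le r$ the shift satisfies $0<\tfrac{s}{nr}\le\tfrac1n$, so each term $\tfrac{s}{nr}$ plays exactly the role that the variable $t\in[0,\tfrac1n]$ plays there; combined with $\sum_{s=1}^{r}w_s=1$, every estimate of that proof transfers with no change of constants.

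First I would fix $x\in\mathbb{R}$ and, for each $s$, apply Taylor's formula with integral remainder,
\[
f\!\left(\tfrac{v}{n}+\tfrac{s}{nr}\right)=\sum_{k=0}^{N}\frac{f^{(k)}(x)}{k!}\left(\tfrac{v}{n}+\tfrac{s}{nr}-x\right)^{k}+\gamma_s(v),
\]
\[
\gamma_s(v):=\int_x^{\frac{v}{n}+\frac{s}{nr}}\!\left(f^{(N)}(u)-f^{(N)}(x)\right)\frac{\left(\tfrac{v}{n}+\tfrac{s}{nr}-u\right)^{N-1}}{(N-1)!}\,du.
\]
Multiplying by $w_s$, summing over $s$, multiplying by $\Psi(nx-v)$ and integrating over $\mathbb{R}$, and using $\sum_s w_s=1$ together with $\int_{-\infty}^{\infty}\Psi(nx-v)\,dv=1$, I obtain
\[
\overline{\mathtt{B}_n}(f)(x)-f(x)=\sum_{k=1}^{N}\frac{f^{(k)}(x)}{k!}\,\overline{\mathtt{A}_n}\!\left((\cdot-x)^k\right)(x)+R_n(x),
\]
with $R_n(x):=\int_{-\infty}^{\infty}\bigl(\sum_{s=1}^{r}w_s\gamma_s(v)\bigr)\Psi(nx-v)\,dv$.

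Next I would estimate $|\gamma_s(v)|$, uniformly in $s$, on $F_1=\{v:|\tfrac{v}{n}-x|<\tfrac1{n^{\alpha}}\}$ and $F_2=\{v:|\tfrac{v}{n}-x|\ge\tfrac1{n^{\alpha}}\}$. On $F_1$, since $0<\tfrac{s}{nr}\le\tfrac1n$, every $u$ between $x$ and $\tfrac{v}{n}+\tfrac{s}{nr}$ satisfies $|u-x|\le\tfrac1n+\tfrac1{n^{\alpha}}$, hence $|f^{(N)}(u)-f^{(N)}(x)|\le\omega(f^{(N)},\tfrac1n+\tfrac1{n^{\alpha}})$, and integrating the monomial gives $|\gamma_s(v)|\le\omega(f^{(N)},\tfrac1n+\tfrac1{n^{\alpha}})\tfrac{(\frac1n+\frac1{n^{\alpha}})^N}{N!}$, exactly as in Case~1 of Theorem~\ref{11}. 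On $F_2$, using $|f^{(N)}(u)-f^{(N)}(x)|\le 2\Vert f^{(N)}\Vert_\infty$ and $|\tfrac{v}{n}+\tfrac{s}{nr}-x|\le\tfrac1n+|\tfrac{v}{n}-x|$ gives $|\gamma_s(v)|\le 2\Vert f^{(N)}\Vert_\infty\tfrac{(\frac1n+|\frac{v}{n}-x|)^N}{N!}$. Since $\sum_s w_s=1$, the same two bounds hold for $\sum_s w_s\gamma_s(v)$; splitting $R_n$ over $F_1$ and $F_2$, using $(1+|nx-v|)^N\le 2^{N-1}(1+|nx-v|^N)$, Theorem~\ref{t1}, the substitution $t=\beta x$ and $t^N\le 2^N N!\,e^{t/2}$, I reproduce the computation of Theorem~\ref{11} and reach
\begin{align*}
|R_n(x)|&\le\omega\!\left(f^{(N)},\tfrac1n+\tfrac1{n^{\alpha}}\right)\frac{\left(\tfrac1n+\tfrac1{n^{\alpha}}\right)^N}{N!}\\
&\quad+\frac{2^{N}\Vert f^{(N)}\Vert_\infty}{n^{N}N!}\left(q+\tfrac1q\right)e^{\beta}e^{-\frac{\beta n^{1-\alpha}}{2}}\left[1+\frac{2^{N+1}N!}{\beta^{N}}\right]\longrightarrow 0,\quad n\to\infty,
\end{align*}
which is (i); part (ii) is the special case $f^{(k)}(x)=0$, $k=1,\dots,N$.

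Finally, for (iii)--(iv) I would bound the moments. Using $0<\tfrac{s}{nr}\le\tfrac1n$ and $\sum_s w_s=1$ to get $\sum_s w_s|\tfrac{v}{n}+\tfrac{s}{nr}-x|^k\le(\tfrac1n+|\tfrac{v}{n}-x|)^k$, then the substitution $h=nx-v$, the inequality $(1+|h|)^k\le 2^{k-1}(1+|h|^k)$ and the Proposition,
\begin{align*}
\left|\overline{\mathtt{A}_n}\!\left((\cdot-x)^k\right)(x)\right|
&\le\int_{-\infty}^{\infty}\left(\tfrac1n+\left|\tfrac{v}{n}-x\right|\right)^{k}\Psi(nx-v)\,dv
=\frac{1}{n^{k}}\int_{-\infty}^{\infty}(1+|h|)^{k}\Psi(h)\,dh\\
&\le\frac{2^{k-1}}{n^{k}}\left[1+\int_{-\infty}^{\infty}|h|^{k}\Psi(h)\,dh\right]\\
&\le\frac{2^{k-1}}{n^{k}}\left[1+\frac{1-e^{-\beta}}{1+e^{-\beta}}\frac{1}{k+1}+\left(q+\tfrac1q\right)\frac{e^{\beta}k!}{\beta^{k}}\right].
\end{align*}
Inserting this together with the bound on $|R_n(x)|$ into the identity for $\overline{\mathtt{B}_n}(f)(x)-f(x)$ and applying the triangle inequality yields (iii), and taking $\sup_{x\in\mathbb{R}}$ yields (iv). The only thing to watch is the bookkeeping with the finite sum over $s$ and the uniform bound $\tfrac{s}{nr}\le\tfrac1n$; once those are recorded, the proof is mechanically identical to that of Theorem~\ref{11}, so I do not expect any genuine obstacle beyond this routine adaptation.
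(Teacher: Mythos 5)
Your proposal is correct and follows essentially the same route as the paper's own proof: Taylor expansion with integral remainder at each node $\tfrac{v}{n}+\tfrac{s}{nr}$, the uniform bound $\tfrac{s}{nr}\le\tfrac{1}{n}$ together with $\sum_s w_s=1$ to reduce everything to the Kantorovich-case estimates of Theorem~\ref{11}, the same $F_1$/$F_2$ split, and the same moment bound via $(1+|h|)^k\le 2^{k-1}(1+|h|^k)$ and the Proposition. No gaps.
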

\begin{proof}
Since it is already known that 
\begin{align*}
f \left( \frac{v}{n}+\frac{s}{nr} \right)&=\sum_{k=0}^N \frac{f^{(k)}(x)}{k!}\left( \frac{v}{n}+\frac{s}{nr} -x \right)^k\\
&+ \int_{x}^{\frac{v}{n}+\frac{s}{nr}} \left( f^{(N)}(t)- f^{(N)}(x) \right) \frac{\left( \frac{v}{n}+\frac{s}{nr} -t \right)^{N-1}}{(N-1)!}dt
\end{align*}
and 
\begin{align*}
\sum_{s=1}^r w_s f \left( \frac{v}{n}+\frac{s}{nr} \right)&=\sum_{k=0}^N \frac{f^{(k)}(x)}{k!} \sum_{s=1}^r w_s\left( \frac{v}{n}+\frac{s}{nr} -x \right)^k\\
&+ \sum_{s=1}^r w_s \int_{x}^{\frac{v}{n}+\frac{s}{nr}} \left( f^{(N)}(t)- f^{(N)}(x) \right) \frac{\left( \frac{v}{n}+\frac{s}{nr} -t \right)^{N-1}}{(N-1)!}dt,
\end{align*}
we can write that  
\begin{align*}
\overline{\mathtt{B}_n}(f)(x)&= \int_{- \infty}^{\infty} \left( \sum_{s=1}^r w_s f \left( \frac{v}{n}+\frac{s}{nr} \right) \right)\Psi(nx-v)dv\\
&= \sum_{k=0}^N \frac{f^{(k)}(x)}{k!} \left(    \int_{- \infty}^{\infty} \sum_{s=1}^r w_s \left( \frac{v}{n}+\frac{s}{nr} -x \right)^k\           \right)\Psi(nx-v)dv\\
&+ \int_{- \infty}^{\infty} \left( \sum_{s=1}^r w_s \int_{x}^{\frac{v}{n}+\frac{s}{nr}} \left( f^{(N)}(t)- f^{(N)}(x) \right) \frac{\left( \frac{v}{n}+\frac{s}{nr} -t \right)^{N-1}}{(N-1)!} dt\right)\Psi(nx-v)dv,
\end{align*}
and 
\begin{align*}
\overline{\mathtt{B}_n}(f)(x)-f(x)& = \sum_{k=1}^N \frac{ f^{(k)} (x)}{k!} \left( \overline{\mathtt{A}_n} \left((\cdot -x)^k \right)(x)\right) +R_n(x), 
\end{align*}
where
\begin{equation*}
R_n(x):=\int_{- \infty}^{\infty} \left( \sum_{s=1}^r w_s \int_{x}^{\frac{v}{n}+\frac{s}{nr}}  \left( f^{(N)}(t)- f^{(N)}(x) \right) \frac{\left( \frac{v}{n}+\frac{s}{nr} -t \right)^{N-1}}{(N-1)!} dt\right)\Psi(nx-v)dv
\end{equation*}
and  calling 
\begin{equation*}
\gamma(v):=\sum_{s=1}^r w_s \int_{x}^{\frac{v}{n}+\frac{s}{nr}} \left( f^{(N)}(t)- f^{(N)}(x) \right) \frac{\left( \frac{v}{n}+\frac{s}{nr} -t \right)^{N-1}}{(N-1)!}dt.
\end{equation*}
Now we have two cases: \begin{itemize}
\item[1.]  $ \left \vert \frac{v}{n}-x\right \vert< \frac{1}{n^{\alpha}},$
\item[2.]  $ \left \vert \frac{v}{n}-x\right \vert \geq \frac{1}{n^{\alpha}}.$
\end{itemize}
Let us start with Case 1. 
\begin{itemize}
\item[Case 1 (i):] When  $\frac{v}{n}+\frac{s}{nr}\geq x$, we have 
\begin{align*}
|\gamma(v)|&\leq\sum_{s=1}^r w_s \omega \left( f^{(N)}, \left \vert \frac{v}{n}+\frac{s}{nr} -x\right \vert \right) \frac{\left( \frac{v}{n}+\frac{s}{nr} -x \right)^{N}}{N!}\\
&\leq \omega \left(f^{(N)}, \frac{1}{n}+\frac{1}{n^{\alpha}}\right) \frac{\left(   \frac{1}{n}+\frac{1}{n^{\alpha}} \right)^N}{N!}.
\end{align*}
\item[Case 1 (ii):] When $\frac{v}{n}+\frac{s}{nr}<x$, we have
\begin{align*}
|\gamma(v)|&= \left \vert \sum_{s=1}^r w_s \int^{x}_{\frac{v}{n}+\frac{s}{nr}} \left( f^{(N)}(t)- f^{(N)}(x) \right) \frac{ \left( t-\left( \frac{v}{n}+\frac{s}{nr} \right) \right)^{N-1}}{(N-1)!}dt \right \vert \\
&\leq \sum_{s=1}^r w_s \int^{x}_{\frac{v}{n}+\frac{s}{nr}}  \left \vert f^{(N)}(t)- f^{(N)}(x) \right \vert \frac{ \left( t-\left( \frac{v}{n}+\frac{s}{nr} \right) \right)^{N-1}}{(N-1)!}dt\\
&\leq\sum_{s=1}^r w_s \omega \left( f^{(N)}, \left \vert x-\left( \frac{v}{n}+\frac{s}{nr} \right) \right \vert \right) \frac{\left( x-\left( \frac{v}{n}+\frac{s}{nr} \right)\right)^{N}}{N!}\\
&\leq \omega \left(f^{(N)}, \frac{1}{n}+\frac{1}{n^{\alpha}}\right) \frac{\left(   \frac{1}{n}+\frac{1}{n^{\alpha}} \right)^N}{N!}.
\end{align*}
\end{itemize}
%Therefore, if $\frac{u}{n}+\frac{m}{nr}<x$, 
By considering the above inequalities,  we find that 
\begin{equation*}
|\gamma(v)|\leq \omega \left(f^{(N)}, \frac{1}{n}+\frac{1}{n^{\alpha}}\right) \frac{\left(   \frac{1}{n}+\frac{1}{n^{\alpha}} \right)^N}{N!}
\end{equation*}
and
\begin{equation*}
\left \vert \int_{\left \vert \frac{v}{n}-x \right \vert< \frac{1}{n^{\alpha}}} \left( \sum_{s=1}^r w_s \int_{x}^{\frac{v}{n}+\frac{s}{nr}} \left( f^{(N)}(t)- f^{(N)}(x) \right) \frac{\left( \frac{v}{n}+\frac{s}{nr} -t \right)^{N-1}}{(N-1)!}dt \right)\Psi(nx-v)dv \right \vert
\end{equation*} 
\begin{equation*}
 \leq \int_{\left \vert \frac{v}{n}-x \right \vert< \frac{1}{n^{\alpha}}} \Psi(nx-v) |\gamma (v)|dv \leq \omega \left(f^{(N)}, \frac{1}{n}+\frac{1}{n^{\alpha}}\right) \frac{\left(   \frac{1}{n}+\frac{1}{n^{\alpha}} \right)^N}{N!}.
\end{equation*}
\begin{itemize}
\item[Case 2:] In this case, we can write that
\end{itemize}
\begin{equation*}
\left \vert \int_{\left \vert \frac{v}{n}-x \right \vert \geq \frac{1}{n^{\alpha}}} \Psi(nx-v)  \left( \sum_{s=1}^r w_s \int_{x}^{\frac{v}{n}+\frac{s}{nr}} \left( f^{(N)}(t)- f^{(N)}(x) \right) \frac{\left( \frac{v}{n}+\frac{s}{nr} -t \right)^{N-1}}{(N-1)!}dt \right)dv \right \vert
\end{equation*}
\begin{equation*}
 \leq \int_{\left \vert \frac{v}{n}-x \right \vert\geq \frac{1}{n^{\alpha}}} \Psi(nx-v) |\gamma (v)|dv =:\xi.
\end{equation*}
First let us consider the Case  $\frac{v}{n}+\frac{s}{nr} \geq x$. Then we have 
\begin{equation*}
 |\gamma(v)|\leq \frac{ 2 \left \Vert f^{(N)} \right \Vert_{\infty}}{N!} \sum_{s=1}^r w_s \left( \frac{v}{n}+\frac{s}{nr}-x \right)^N.
\end{equation*}
Now let us assume that  $\frac{v}{n}+\frac{s}{nr} < x$. Then we have 
\begin{align*}
|\gamma(v)|&= \left \vert \sum_{s=1}^r w_s \int^{x}_{\frac{v}{n}+\frac{s}{nr}} \left( f^{(N)}(t)- f^{(N)}(x) \right) \frac{ \left( t-\left( \frac{v}{n}+\frac{s}{nr} \right) \right)^{N-1}}{(N-1)!}dt \right \vert \\
&\leq \sum_{s=1}^r w_s \int^{x}_{\frac{v}{n}+\frac{s}{nr}}  \left \vert f^{(N)}(t)- f^{(N)}(x) \right \vert \frac{ \left( t-\left( \frac{v}{n}+\frac{s}{nr} \right) \right)^{N-1}}{(N-1)!}dt\\
&\leq  \frac{ 2 \left \Vert f^{(N)} \right \Vert_{\infty}}{N!} \sum_{s=1}^r w_s \left( x-\left(\frac{v}{n}+\frac{s}{nr} \right) \right)^N
\end{align*}
and consequently we can write for all cases 
\begin{equation*}
 |\gamma(v)|\leq \frac{ 2 \left \Vert f^{(N)} \right \Vert_{\infty}}{N!}\left( \left \vert x-\frac{v}{n}\right \vert+\frac{1}{n} \right)^N.
\end{equation*}
Similarly, as in the earlier theorem, we have that 
\begin{equation*}
 \left \vert  \int_{\left \vert \frac{v}{n}-x \right \vert \geq \frac{1}{n^{\alpha}}} \Psi(nx-v) \gamma (v)dv \right \vert\leq \xi
\end{equation*}
and
\begin{equation*}
\xi \leq\frac{ 2^N\left \Vert f^{(N)} \right \Vert_{\infty}}{n^N N!} \left( q+\frac{1}{q} \right)e^{\beta } e^{-\frac{\beta  n^{1-\alpha}}{2}} \left[1 +  \frac{ 2^{N+1} N!}{\beta ^{N}}\right]\rightarrow 0,~as~ n\rightarrow \infty.
\end{equation*}
We also see for $k=1, 2, \cdots, N$ that
\begin{align*}
\left\vert \overline{\mathtt{B}_n} \left((\cdot -x)^k \right)(x)\right \vert&=\left\vert   \int_{- \infty}^{\infty} \left( \sum_{s=1}^r w_s\left( \frac{v}{n}+\frac{s}{nr} -x \right)^k \right)\Psi(nx-v)dv \right \vert\\
&\leq \int_{- \infty}^{\infty} \left( \sum_{s=1}^r w_s \left\vert \frac{v}{n}+\frac{s}{nr} -x  \right \vert^k \right)\Psi(nx-v)dv\\
& \leq  \int_{- \infty}^{\infty} \left( \sum_{s=1}^r w_s \left(\left\vert  \frac{v}{n} -x  \right \vert +\frac{s}{nr} \right)^k \right)\Psi(nx-v)dv\\
&  \leq  \int_{- \infty}^{\infty} \left( \frac{1}{n}+\left\vert  \frac{v}{n} -x  \right \vert  \right)^k \Psi(nx-v)dv\\
& =\frac{1}{n^k} \int_{- \infty}^{\infty} \left( 1+\left\vert nx -v\right \vert  \right)^k \Psi(nx-v)dv\\
& \leq \frac{2^{k-1}}{n^k} \left[1+ \frac{1-e^{-\beta }}{(1+e^{-\beta })}\frac{1}{k+1} +\left( q+\frac{1}{q} \right)\frac{ e^{\beta} k!}{\beta^k} \right]\rightarrow 0, ~as ~n\rightarrow \infty.
\end{align*}
Hence we complete the proof.
\end{proof}
\begin{theorem}
If $0<\alpha<1$, $n \in \mathbb{N}:n^{1-\alpha}>2$, $x \in \mathbb{R}$, $f^{(k)} \in C^{N}(\mathbb{R})$, $N \in \mathbb{N}$, $k=0, 1, \cdots, s \in \mathbb{N}$ with $f^{(N+k)} \in C_B(\mathbb{R})$. Then 
\begin{itemize}
\item[(i)]
\begin{align*}
\left| (\mathtt{B}_n (f))^{(k)} (x)-f^{(k)}(x) - \sum_{m=1}^N \frac{f^{(k+m)}(x)}{m!} \left( \mathtt{A}_n\left( (\cdot -x)^{m} \right) \right)(x) \right|
\end{align*}
\begin{equation*}
\leq \frac{\omega \left( f^{(N+k)}, \frac{1}{n^{\alpha}} \right)}{n^{\alpha N}N!}+ \frac{2^{N+2}  \Vert f^{(N+k)} \Vert_{\infty}e^{\beta } \left( q+\frac{1}{q} \right)}{n^N \beta ^N } e^{ \frac{-\beta  n^{1-\alpha}}{2}},
\end{equation*}
\item[(ii)]
\begin{align*}
\left|( \mathtt{B}^*_n (f))^{(k)}(x)-f^{(k)}(x) - \sum_{m=1}^N \frac{f^{(k+m)}(x)}{m!} \left(\mathtt{A}^*_n\left( (\cdot -x)^m\right) \right)(x)\right|
\end{align*}
\begin{align*}
&\leq  \omega \left(f^{(N+k)}, \frac{1}{n}+\frac{1}{n^{\alpha}}\right) \frac{\left(  \frac{1}{n}+\frac{1}{n^{\alpha}}  \right)^N}{N!}\\
&+ \frac{ 2^N\left \Vert f^{(N+k)} \right \Vert_{\infty}}{n^N N!} \left( q+\frac{1}{q} \right)e^{\beta } e^{-\frac{\beta  n^{1-\alpha}}{2}} \left[1 +  \frac{ 2^{N+1} N!}{\beta ^{N}}\right],
\end{align*}
\item[(iii)]
\begin{align*}
\left| (\overline{\mathtt{B}_n} (f))^{(k)}(x)-f^{(k)}(x) - \sum_{m=1}^N \frac{f^{(k+m)}(x)}{m!} \left( \overline{ \mathtt{A}_n} \left( (\cdot -x)^{m} \right) \right)(x)\right| 
\end{align*}
\begin{align*}
&\leq  \omega \left(f^{(N+k)}, \frac{1}{n}+\frac{1}{n^{\alpha}}\right) \frac{\left(  \frac{1}{n}+\frac{1}{n^{\alpha}}  \right)^N}{N!}\\
&+\frac{ 2^N\left \Vert f^{(N+k)} \right \Vert_{\infty}}{ n^N N!}\left( q+\frac{1}{q} \right)e^{\beta } e^{-\frac{\beta  n^{1-\alpha}}{2}} \left[1 +  \frac{ 2^{N+1} N!}{\beta ^{N}}\right].
\end{align*}
\end{itemize}
\end{theorem}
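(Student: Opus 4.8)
The plan is to reduce the statement to the already–established Theorems \ref{10}, \ref{11} and \ref{12} by exploiting the fact that differentiation commutes with each of the three operators. Recall from the remarks preceding this theorem that, whenever $g\in C^{(k)}(\mathbb{R})$ with $g^{(j)}\in C_B(\mathbb{R})$ for $j=1,\dots,k$, one has
\begin{equation*}
\left(\mathtt{B}_n(g)\right)^{(k)}(x)=\mathtt{B}_n\!\left(g^{(k)}\right)(x),\qquad
\left(\mathtt{B}^*_n(g)\right)^{(k)}(x)=\mathtt{B}^*_n\!\left(g^{(k)}\right)(x),\qquad
\left(\overline{\mathtt{B}_n}(g)\right)^{(k)}(x)=\overline{\mathtt{B}_n}\!\left(g^{(k)}\right)(x),
\end{equation*}
for all $x\in\mathbb{R}$. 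The hypotheses here state $f^{(k)}\in C^{(N)}(\mathbb{R})$ with $f^{(N+k)}\in C_B(\mathbb{R})$, so the commutation identities apply with $g=f$ and the role of $k$, and we may set $F:=f^{(k)}$, which lies in $C^{(N)}(\mathbb{R})$ with $F^{(N)}=f^{(N+k)}\in C_B(\mathbb{R})$ and $F^{(m)}=f^{(k+m)}$ for $m=1,\dots,N$.

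First I would fix $k$ and rewrite the left-hand side of each of (i), (ii), (iii). Using the commutation identity, $(\mathtt{B}_n(f))^{(k)}(x)-f^{(k)}(x)=\mathtt{B}_n(F)(x)-F(x)$, and likewise for $\mathtt{B}^*_n$ and $\overline{\mathtt{B}_n}$. The Taylor-type correction terms $\sum_{m=1}^N \frac{f^{(k+m)}(x)}{m!}\,(\mathtt{A}_n((\cdot-x)^m))(x)$ are precisely $\sum_{m=1}^N \frac{F^{(m)}(x)}{m!}\,(\mathtt{A}_n((\cdot-x)^m))(x)$, i.e. exactly the correction terms appearing in Theorem \ref{10}(i) applied to $F$ in place of $f$ (with $N$ unchanged), and analogously for the starred and barred variants via Theorems \ref{11}(i) and \ref{12}(i).

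Then I would simply invoke Theorem \ref{10}(i) with $f$ replaced by $F$ to obtain (i): the bound becomes
\begin{equation*}
\frac{\omega\!\left(F^{(N)},\tfrac{1}{n^{\alpha}}\right)}{n^{\alpha N}N!}+\frac{2^{N+2}\,\Vert F^{(N)}\Vert_{\infty}\,e^{\beta}\left(q+\tfrac{1}{q}\right)}{n^N\beta^N}\,e^{-\beta n^{1-\alpha}/2},
\end{equation*}
and substituting $F^{(N)}=f^{(N+k)}$ gives exactly the claimed estimate. Parts (ii) and (iii) follow in the same way from Theorems \ref{11}(i) and \ref{12}(i) applied to $F=f^{(k)}$, again substituting $F^{(N)}=f^{(N+k)}$. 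The convergence to $0$ as $n\to\infty$ is inherited verbatim from those theorems.

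The only point requiring a little care—and the one I would treat as the main (though minor) obstacle—is verifying that the hypotheses of Theorems \ref{10}--\ref{12} are genuinely met by $F=f^{(k)}$, namely that $F\in C^{(N)}(\mathbb{R})$ and $F^{(N)}\in C_B(\mathbb{R})$, and that the commutation identities are legitimately applicable (which needs $f^{(j)}\in C_B(\mathbb{R})$ for the intermediate orders $j$ up to $k$; this is part of the standing hypothesis $f^{(k)}\in C^{(N)}(\mathbb{R})$ together with $f^{(N+k)}\in C_B(\mathbb{R})$, from which boundedness of the lower-order derivatives on the line is not automatic and must be read as assumed). Once this bookkeeping is settled, the three statements are immediate translations of the corresponding parts of Theorems \ref{10}, \ref{11}, \ref{12}.
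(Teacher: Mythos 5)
Your proposal is correct and is essentially the paper's own argument: the paper likewise deduces the result immediately from Theorems \ref{10}, \ref{11}, \ref{12} via the commutation identities $(\mathtt{B}_n(f))^{(k)}=\mathtt{B}_n(f^{(k)})$ (and their starred and barred analogues), applied with $f^{(k)}$ in the role of $f$. Your added remark about verifying the hypotheses for the intermediate derivatives is a reasonable bookkeeping point, but it does not alter the route, which coincides with the paper's.
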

\begin{proof}
By using Theorems \ref{10}, \ref{11}, \ref{12}, we immediately obtain the proof. 
\end{proof}
\section{Iterated Version of $\mathtt{B}_n$, $\mathtt{B}^*_n$ $\overline{\mathtt{B}_n}$ }
In this section, we consider the iterated versions of our operators  and examine  approximation properties  of them under the light of \cite{anast 2022},  \cite{G.A.2024a},  \cite{G.A.2024}, \cite{G.A.2024c}, \cite{G.A.2024d}, \cite{G.A.2024e}, \cite{anast 2025}.
\begin{remark}(About Iterated Convolution)\\
Notice that 
\begin{equation*}
\mathtt{B}_n(f)(x)=\int_{-\infty}^{ \infty} f \left(x-\frac{h}{n}\right) \Psi(h)dh, ~for~f \in C_{B}(\mathbb{R})
\end{equation*}
and let $x_{k}\rightarrow x$, as $k\rightarrow \infty$, and 
\begin{equation*}
\mathtt{B}_n(f)(x_{k})-\mathtt{B}_n(f)(x)=\int_{-\infty}^{ \infty} 
\left[f \left(x_k-\frac{h}{n}\right)-f \left(x-\frac{h}{n}\right)\right] \Psi(h)dh.
\end{equation*}
We have that 
\begin{equation*}
f \left(x_k-\frac{h}{n}\right) \Psi(h)\rightarrow f \left(x-\frac{h}{n}\right)\Psi(h), ~for~every~ h \in \mathbb{R}, ~ k \rightarrow \infty.
\end{equation*}
\end{remark}
Furthermore
\begin{equation*}
|\mathtt{B}_n(f)(x_{k})-\mathtt{B}_n(f)(x)|
\leq \int_{-\infty}^{ \infty} 
\left \vert f \left(x_k-\frac{h}{n}\right)-f \left(x-\frac{h}{n}\right) \right \vert \Psi(h) dh\rightarrow 0, ~k \rightarrow \infty,
\end{equation*}
by Dominated Convergence Theorem, since 
\begin{equation*}
\left \vert f \left(x_k-\frac{h}{n}\right) \right \vert \Psi(h)dh \leq \Vert f \Vert_{\infty}  \Psi(h)
\end{equation*}
and $\Vert f \Vert_{\infty}  \Psi(h) $ is integrable over $\left( -\infty, \infty \right)$, for every $  h \in \left( -\infty, \infty \right)$.
Thus $\mathtt{B}_n(f) \in C_{B}(\mathbb{R})$.\\
 Also we have that
\begin{equation*}
|\mathtt{B}_n(f)(x)|\leq \Vert f \Vert _{\infty} \int_{-\infty}^{ \infty} \Psi(h)dh =\Vert f \Vert_{\infty}
\end{equation*}
i.e.,
\begin{equation*}
\Vert \mathtt{B}_n(f) \Vert _{\infty} \leq \Vert f \Vert _{\infty}
\end{equation*}
 which means that $B_n$ is bounded and linear for $n \in \mathbb{N}.$
%So  $\mathtt{A}_n$ is a bounded positive linear operator.\\
%Clearly we have that 
%\begin{equation*}
%\Vert \mathtt{A}^2_n(f) \Vert _{\infty}=\Vert \mathtt{A}_n(\mathtt{A}_n(f)) \Vert _{\infty} \leq\Vert \mathtt{A}_n(f) \Vert %_{\infty} \leq  \Vert f \Vert _{\infty}.
%\end{equation*}
%\begin{equation*}
%\Vert \mathtt{A}^k_n(f) \Vert _{\infty}\leq \Vert \mathtt{A}^{k-1}_n(f) \Vert _{\infty}\leq \Vert \mathtt{A}^{k-2}_n(f) %\Vert _{\infty}\leq\ldots\leq \Vert f \Vert _{\infty},
%\end{equation*}
%for $k \in \mathbb{N}$ which implies that $\mathtt{A}_n^k$ is a bounded linear operator for  $k \in \mathbb{N}$.
\begin{remark} 
Let $r \in \mathbb{N}$. Since 
\begin{align*}
\mathtt{B}_n^r f-f&=(\mathtt{B}_n^r f-\mathtt{B}_n^{r-1} f)+(\mathtt{B}_n^{r-1} f+\mathtt{B}_n^{r-2} f)+(\mathtt{B}_n^{r-2} f+\mathtt{B}_n^{r-3} f)\\
&+\ldots + (\mathtt{B}_n^2 f-\mathtt{B}_n f)+(\mathtt{B}_n^r f-f).
\end{align*}
\end{remark} 
%then 
%\begin{align*}
%\Vert \mathtt{A}_n^r f-f \Vert _{\infty}
%&=\Vert \mathtt{A}_n^{r-1} f ( \mathtt{A}_n f-f)\Vert _{\infty}+\Vert \mathtt{A}_n^{r-2} f  ( \mathtt{A}_n f-f)\Vert _{\infty}+\Vert \mathtt{A}_n^{r-3} f ( \mathtt{A}_n f-f)\Vert _{\infty}\\
%&+\ldots + \Vert \mathtt{A}_n f ( \mathtt{A}_n f-f) \Vert _{\infty} +\Vert \mathtt{A}_n f-f\Vert _{\infty}\leq r \Vert \mathtt{A}_n f-f\Vert _{\infty}.
%\end{align*}
We have that  $\Vert \mathtt{B}_n^r f-f \Vert _{\infty} \leq r \Vert \mathtt{B}_n f-f\Vert _{\infty} $ and 
\begin{align*}
\mathtt{B}_{k_r}(\mathtt{B}_{k_{r-1}}(\ldots \mathtt{B}_{k_2}(\mathtt{B}_{k_1} f)))-f&=\ldots=\mathtt{B}_{k_r}(\mathtt{B}_{k_{r-1}}(\ldots \mathtt{B}_{k_2}))(\mathtt{B}_{k_1}-f)\\
&+\mathtt{B}_{k_r}(\mathtt{B}_{k_{r-1}}(\ldots \mathtt{B}_{k_3}))(\mathtt{B}_{k_2}-f)\\
&+\mathtt{B}_{k_r}(\mathtt{B}_{k_{r-1}}(\ldots \mathtt{B}_{k_4}))(\mathtt{B}_{k_3}-f)\mathtt{B}_{k_r}\\
&+\ldots+(\mathtt{B}_{k_{r-1}} f-f)+\mathtt{B}_{k_r}f-f
\end{align*}
where $k_1, k_2, \ldots, k_r\in \mathbb{N}:k_1\leq k_2\leq \ldots \leq k_r $ and
\begin{equation*}
\Vert \mathtt{B}_{k_r}(\mathtt{B}_{k_{r-1}}(\ldots \mathtt{B}_{k_2}(\mathtt{B}_{k_1} f)))-f\Vert_{\infty}\leq \sum_{m=1}^r\Vert \mathtt{B}_{k_m}f-f\Vert_{\infty}
\end{equation*}
as in  \cite{G.A.2024}. The similar results can be obtained for $\mathtt{B}^*_n$ and $ \overline{\mathtt{B}_n}$ using the same technique. 
\begin{remark}
Notice that 
\begin{equation*}
\mathtt{B}^*_n(f)(x)=n \int_{-\infty}^{ \infty} \left( \int^{\frac{1}{n}}_0 f \left( t+\left(x-\frac{h}{n}\right)\right)dt\right) \Psi(h) dh, ~f\in C_{B}(\mathbb{R}),
\end{equation*}
and let $x_{k}\rightarrow x$, as $k\rightarrow \infty$,
\end{remark}
and
\begin{equation*}
\left\vert \mathtt{B}^*_n (f)(x_k)-\mathtt{B}^*_n (f)(x)\right\vert 
\end{equation*}
\begin{align*}
%=&n \left \vert  \int_{-\infty}^{ \infty} \left( \int^{\frac{1}{n}}_0 \left[ f \left( t+\left(x_k-\frac{h}{n}\right)
%\right)-  f \left( t+\left(x-\frac{h}{n}\right)\right) \right] dt \right) \Phi(h)dh \right \vert \\
\leq& n \int_{-\infty}^{ \infty} \left( \int^{\frac{1}{n}}_0 \left \vert f \left( t+\left(x_k-\frac{h}{n}\right)\right) - f \left( t+\left(x-\frac{h}{n}\right)\right) \right \vert dt\right) \Psi(h)dh,
\end{align*}
  by  Bounded Convergence Theorem, we get that:
\begin{equation*}
x_k\rightarrow x\rightsquigarrow t+\left(x_k-\frac{h}{n}\right)\rightarrow t+\left(x-\frac{h}{n}\right)
\end{equation*}
and 
\begin{equation*}
f \left( t+\left(x_k-\frac{h}{n}\right)\right)\rightarrow f \left( t+\left(x-\frac{h}{n}\right)\right), 
\end{equation*}
\begin{equation*}
\left \vert f \left( t+\left(x_k-\frac{h}{n}\right)\right)\right \vert \leq \Vert f \Vert_{\infty}
\end{equation*}
and $\left[ 0, \frac{1}{n} \right ] $ is finite. Hence
\begin{equation*}
n \int^{\frac{1}{n}}_0 \left \vert f \left( t+\left(x_k-\frac{h}{n}\right)\right) - f \left( t+\left(x-\frac{h}{n}\right)\right) \right \vert dt \rightarrow0, ~as~ k\rightarrow \infty.
\end{equation*}
Therefore it holds  that
\begin{equation*}
 n\int^{\frac{1}{n}}_0 f \left( t+\left(x_k-\frac{h}{n}\right)\right) dt\rightarrow n \int^{\frac{1}{n}}_0 f \left( t+\left(x-\frac{h}{n}\right)\right) dt \rightarrow0, ~as~ k\rightarrow \infty
\end{equation*}
and
\begin{equation*}
 \left( n\int^{\frac{1}{n}}_0 f \left( t+\left(x_k-\frac{h}{n}\right)\right) dt \right)\Psi(h)\rightarrow \left( n \int^{\frac{1}{n}}_0 f \left( t+\left(x-\frac{h}{n}\right)\right) dt \right)\Psi(h), 
\end{equation*}
as $k\rightarrow \infty, ~for~every~ h \in \left( -\infty, \infty \right)$.
Also, we get 
\begin{equation*}
 \left \vert \left( n\int^{\frac{1}{n}}_0 f \left( t+\left(x_k-\frac{h}{n}\right)\right) dt \right)\Psi(h) \right \vert \leq \Vert f \Vert_{\infty}\Psi(h).
\end{equation*}
Again by Dominated Convergence Theorem,
\begin{equation*}
\mathtt{B}^*_n (f)(x_k)\rightarrow \mathtt{B}^*_n (f)(x),~as~ k\rightarrow \infty.
\end{equation*}
Thus  $ \mathtt{B}^*_n (f)(x)$ is bounded and continuous in $x \in \left( -\infty, \infty \right)$ and the iterated facts hold for $\mathtt{B}^*_n$ as in the $ \mathtt{B}_n (f)$ case, all the same. \\
\begin{remark}
Next we observe that: $f\in C_{B}(\mathbb{R})$, and
\begin{equation}
 \overline{\mathtt{B}_n}(f)(x):=\int_{-\infty}^{\infty} \left( \sum_{s=1}^r w_s f \left( \left(x-\frac{h}{n}\right)+\frac{s}{nr}\right)\right) \Psi(h)dh.
\end{equation}
Let $x_{k}\rightarrow x$, as $k\rightarrow \infty$.
\end{remark}
 Then
\begin{align*}
\left \vert \overline{\mathtt{B}_n}(f)(x_k)-\overline{\mathtt{B}_n}(f)(x)\right \vert &=\left\vert \int_{-\infty}^{ \infty} \left(  \sum_{s=1}^r w_s \left( f \left( \left(x_k-\frac{h}{n}\right)+\frac{s}{nr}\right)- f \left( \left(x-\frac{h}{n}\right)+\frac{s}{nr}\right)\right) \right) \Psi(h)dh \right\vert\\
&\leq  \int_{-\infty}^{ \infty}  \left\vert \sum_{s=1}^r w_s \left( f \left( \left(x_k-\frac{h}{n}\right)+\frac{s}{nr}\right)- f \left( \left(x-\frac{h}{n}\right)+\frac{s}{nr} \right) \right)\right\vert \Psi(h)dh \rightarrow 0, ~as~ k\rightarrow \infty.
\end{align*}
The last is from Dominated Convergence Theorem:
\begin{equation*}
\left(x_k-\frac{h}{n}\right)+\frac{s}{nr}\rightarrow \left(x-\frac{h}{n}\right)+\frac{s}{nr}
\end{equation*}
and
\begin{equation*}
\sum_{s=1}^r w_s f \left( \left(x_k-\frac{h}{n}\right)+\frac{s}{nr}\right)\rightarrow \sum_{s=1}^r w_s f \left( \left(x-\frac{h}{n}\right)+\frac{s}{nr} \right)
\end{equation*}
and 
\begin{equation*}
\left(\sum_{s=1}^r w_s f \left( \left(x_k-\frac{h}{n}\right)+\frac{s}{nr}\right)\right)\Psi(h)\rightarrow \left(\sum_{s=1}^r w_s \left( \left(x-\frac{h}{n}\right)+\frac{s}{nr} \right)\right)\Psi(h),
\end{equation*}
as $k\rightarrow \infty, \forall h \in  \left( -\infty, \infty \right)$.
Furthermore, we have 
\begin{equation*}
 \left \vert \sum_{s=1}^r w_s f \left( \left(x_k-\frac{h}{n}\right)+\frac{s}{nr}\right) \right \vert \Psi(h)  \leq \Vert f \Vert_{\infty}\Psi(h)
\end{equation*}
which the last one function is integrable over $ \left( -\infty, \infty \right)$.\\
Therefore 
\begin{equation*}
\overline{\mathtt{B}_n}(f)(x_k)\rightarrow \overline{\mathtt{B}_n}(f)(x),~as~ k\rightarrow \infty.
\end{equation*}
Thus $ \overline{\mathtt{B}_n} (f)(x)$ is  bounded and continuous in $x \in \left( -\infty, \infty \right)$ and the iterated facts holds all the same.
\begin{theorem}
If  $0<\alpha<1$, $n \in \mathbb{N}:n^{1-\alpha}>2$, $r \in \mathbb{N}$, $f \in C_B(\mathbb{R})$, then we have the followings:
\begin{itemize}
\item[(i)] \begin{equation*}
\left \Vert \mathtt{B}_n^r (f) -f \right \Vert_{\infty} \leq r \left \Vert \mathtt{B}_n f -f \right \Vert_{\infty} \leq r \left[ \omega\left(f,\frac{1}{n^{\alpha}}\right)+ \frac{2\left( q+\frac{1}{q} \right)  \Vert f \Vert_{\infty}}{e^{\beta (n^{1-\alpha}-1)}} \right],
\end{equation*}
\item[(ii)] \begin{equation*}
\left \Vert \mathtt{B}_n^{*r} (f) -f \right \Vert_{\infty} \leq r \left \Vert \mathtt{B}^*_n f -f \right \Vert_{\infty} \leq r \left[ \omega\left(f,\frac{1}{n}+\frac{1}{n^{\alpha}}\right)+ \frac{2\left( q+\frac{1}{q} \right)  \Vert f \Vert_{\infty}}{e^{\beta (n^{1-\alpha}-1)}} \right],
\end{equation*}
\item[(iii)] \begin{equation*}
\left \Vert\overline{\mathtt{B}_n}^{r} (f) -f \right \Vert_{\infty} \leq r \left \Vert\overline{\mathtt{B}_n} f -f \right \Vert_{\infty} \leq r \left[ \omega\left(f,\frac{1}{n}+\frac{1}{n^{\alpha}}\right)+ \frac{2\left( q+\frac{1}{q} \right)  \Vert f \Vert_{\infty}}{e^{\beta (n^{1-\alpha}-1)}} \right].
\end{equation*}
\end{itemize}
The rate of convergence of $\mathtt{B}_n^r $, $\mathtt{B}_n^{*r} $, $\overline{\mathtt{B}_n}^{r}$ to identity  is not worse than the rate of convergence of  $\mathtt{B}_n $, $\mathtt{B}_n^{*} $, $\overline{\mathtt{B}_n}$.
\end{theorem}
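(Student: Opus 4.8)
The plan is to reduce everything to the sub-multiplicativity of the sup-norm bound $\Vert \mathtt{B}_n f\Vert_{\infty}\le \Vert f\Vert_{\infty}$ (and its analogues for $\mathtt{B}^*_n$ and $\overline{\mathtt{B}_n}$), which was already established in the remarks of this section, combined with Theorems \ref{theorem3}, \ref{theorem4} and \ref{theorem5}.

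First I would record the telescoping identity valid for any bounded linear operator $L$ on $C_B(\mathbb{R})$ with $L^{0}=\mathrm{id}$:
\begin{equation*}
L^{r}f-f=\sum_{m=1}^{r}\left(L^{m}f-L^{m-1}f\right)=\sum_{m=1}^{r}L^{m-1}\left(Lf-f\right).
\end{equation*}
Since each of the three operators maps $C_B(\mathbb{R})$ into itself and satisfies $\Vert Lg\Vert_{\infty}\le\Vert g\Vert_{\infty}$ for every $g\in C_B(\mathbb{R})$, iterating this contraction property gives $\Vert L^{m-1}h\Vert_{\infty}\le\Vert h\Vert_{\infty}$; applying it with $h=Lf-f$ yields $\Vert L^{m}f-L^{m-1}f\Vert_{\infty}\le\Vert Lf-f\Vert_{\infty}$ for every $m$. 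Summing over $m=1,\dots,r$ and using the triangle inequality produces
\begin{equation*}
\Vert L^{r}f-f\Vert_{\infty}\le r\,\Vert Lf-f\Vert_{\infty}.
\end{equation*}

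Next I would specialize $L=\mathtt{B}_n$, then $L=\mathtt{B}^*_n$, then $L=\overline{\mathtt{B}_n}$. For part (i), bounding $\Vert\mathtt{B}_n f-f\Vert_{\infty}$ by Theorem \ref{theorem3} gives precisely the displayed estimate; for parts (ii) and (iii), bounding $\Vert\mathtt{B}^*_n f-f\Vert_{\infty}$ and $\Vert\overline{\mathtt{B}_n}f-f\Vert_{\infty}$ by Theorems \ref{theorem4} and \ref{theorem5} respectively gives the remaining two. The closing sentence is then immediate: the right-hand side of each iterated bound is merely $r$ times the corresponding single-step bound, so the iterates $\mathtt{B}_n^{r}$, $\mathtt{B}_n^{*r}$, $\overline{\mathtt{B}_n}^{\,r}$ tend to the identity at a rate no worse — up to the fixed factor $r$, which does not depend on $n$ — than $\mathtt{B}_n$, $\mathtt{B}^*_n$, $\overline{\mathtt{B}_n}$.

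There is essentially no serious obstacle here; the only point that needs a little care is the verification — already carried out in the preceding remarks by means of the Dominated/Bounded Convergence Theorem — that each operator sends $C_B(\mathbb{R})$ into $C_B(\mathbb{R})$ and is a sup-norm contraction, so that the composite iterates are well defined and the telescoping argument applies verbatim to $\mathtt{B}^*_n$ and $\overline{\mathtt{B}_n}$ just as it does to $\mathtt{B}_n$.
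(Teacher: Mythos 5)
Your proposal is correct and follows essentially the same route as the paper: the telescoping decomposition $L^{r}f-f=\sum_{m=1}^{r}L^{m-1}(Lf-f)$ combined with the sup-norm contraction $\Vert Lg\Vert_{\infty}\le\Vert g\Vert_{\infty}$ established in the preceding remarks, and then the single-step bounds from Theorems \ref{theorem3}, \ref{theorem4}, \ref{theorem5}. In fact you spell out the argument more carefully than the paper does (its stated telescoping identity contains sign typos and its proof is a one-line citation), so no further comment is needed.
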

\begin{proof}
The proof follows from Theorems  \ref{theorem3}, \ref{theorem4}, \ref{theorem5}.
\end{proof}
\begin{theorem}
If $0<\alpha<1$, $n \in \mathbb{N}; k_1, k_2, \cdots, k_r \in \mathbb{N}:k_1\leq k_2 \leq  \cdots \leq k_r $, $ k_p^{1-\alpha}>2$, $p=1, 2, \cdots, r;$  $f \in C_B(\mathbb{R})$,  then we have the followings:
\begin{itemize}
\item[(i)] \begin{align*}
\Vert \mathtt{B}_{k_r}(\mathtt{B}_{k_{r-1}}(\ldots \mathtt{B}_{k_2}(\mathtt{B}_{k_1} f)))-f\Vert_{\infty}&\leq \sum_{p=1}^r\Vert \mathtt{B}_{k_p}f-f\Vert_{\infty}\\
& \leq \sum_{p=1}^r  \left[ \omega\left(f,\frac{1}{k^{\alpha}_p}\right)+ \frac{2\left( q+\frac{1}{q} \right)  \Vert f \Vert_{\infty}}{e^{\beta (k_p^{1-\alpha}-1)}} \right] \\
&\leq r \left[ \omega\left(f,\frac{1}{k^{\alpha}_1}\right)+ \frac{2\left( q+\frac{1}{q} \right)  \Vert f \Vert_{\infty}}{e^{\beta (k_1^{1-\alpha}-1)}} \right],
\end{align*}
\item[(ii)] \begin{align*}
\Vert \mathtt{B}^*_{k_r}(\mathtt{B}^*_{k_{r-1}}(\ldots \mathtt{B}^*_{k_2}(\mathtt{B}^*_{k_1} f)))-f\Vert_{\infty}&\leq \sum_{p=1}^r\Vert \mathtt{B}^*_{k_p}f-f\Vert_{\infty}\\
& \leq \sum_{p=1}^r \left[ \omega\left(f,\frac{1}{k_p}+\frac{1}{k_p^{\alpha}}\right)+ \frac{2\left( q+\frac{1}{q} \right)  \Vert f \Vert_{\infty}}{e^{\beta (k_p^{1-\alpha}-1)}} \right] \\
&\leq r \left[ \omega\left(f,\frac{1}{k_1}+\frac{1}{k_1^{\alpha}}\right)+ \frac{2\left( q+\frac{1}{q} \right)  \Vert f \Vert_{\infty}}{e^{\beta (k_1^{1-\alpha}-1)}} \right],
\end{align*}
\item[(iii)] \begin{align*}
\Vert\overline{\mathtt{B}}_{k_r}(\mathtt{B}^*_{k_{r-1}}(\ldots \overline{\mathtt{B}}_{k_2}(\mathtt{B}^*_{k_1} f)))-f\Vert_{\infty}&\leq \sum_{p=1}^r\Vert \overline{\mathtt{B}}_{k_p}f-f\Vert_{\infty}\\
& \leq \sum_{p=1}^r \left[ \omega\left(f,\frac{1}{k_p}+\frac{1}{k_p^{\alpha}}\right)+ \frac{2\left( q+\frac{1}{q} \right)  \Vert f \Vert_{\infty}}{e^{\beta (k_p^{1-\alpha}-1)}} \right] \\
&\leq r \left[ \omega\left(f,\frac{1}{k_1}+\frac{1}{k_1^{\alpha}}\right)+ \frac{2\left( q+\frac{1}{q} \right)  \Vert f \Vert_{\infty}}{e^{ \beta (k_1^{1-\alpha}-1)}} \right].
\end{align*}
\end{itemize}
\end{theorem}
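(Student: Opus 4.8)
The plan is to combine the telescoping identity recorded in the Remark just before the statement with the contraction property of the operators, and then to substitute the quantitative estimates of Theorems~\ref{theorem3}, \ref{theorem4}, \ref{theorem5}.

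For part~(i), I would start from the decomposition quoted in that Remark,
\begin{align*}
\mathtt{B}_{k_r}\bigl(\mathtt{B}_{k_{r-1}}(\ldots \mathtt{B}_{k_1} f)\bigr)-f
&=\sum_{p=1}^{r-1}\mathtt{B}_{k_r}\mathtt{B}_{k_{r-1}}\cdots \mathtt{B}_{k_{p+1}}\bigl(\mathtt{B}_{k_p}f-f\bigr)+\bigl(\mathtt{B}_{k_r}f-f\bigr),
\end{align*}
which is a straightforward telescoping. Then I would apply the triangle inequality together with the fact that each $\mathtt{B}_{k_j}$ is linear with $\Vert \mathtt{B}_{k_j}(g)\Vert_{\infty}\le\Vert g\Vert_{\infty}$ (established above), so that any finite composition of them is again a contraction on $C_B(\mathbb{R})$; this bounds the $p$-th summand by $\Vert \mathtt{B}_{k_p}f-f\Vert_{\infty}$ and gives the first inequality $\Vert \mathtt{B}_{k_r}(\ldots \mathtt{B}_{k_1} f)-f\Vert_{\infty}\le\sum_{p=1}^r\Vert \mathtt{B}_{k_p}f-f\Vert_{\infty}$.

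For the middle inequality I would apply Theorem~\ref{theorem3} to each index $k_p$ (each admissible since $k_p^{1-\alpha}>2$). For the last inequality I would use monotonicity: since $0<\alpha<1$ the map $n\mapsto n^{1-\alpha}$ is increasing, so $k_1\le k_p$ yields $k_1^{1-\alpha}\le k_p^{1-\alpha}$, hence $1/k_p^{\alpha}\le 1/k_1^{\alpha}$, and by monotonicity of the modulus of continuity in its second slot $\omega(f,1/k_p^{\alpha})\le\omega(f,1/k_1^{\alpha})$, while $e^{-\beta(k_p^{1-\alpha}-1)}\le e^{-\beta(k_1^{1-\alpha}-1)}$; summing $r$ copies of the $k_1$-bound produces the factor $r$.

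Parts~(ii) and~(iii) are identical line by line, with $\mathtt{B}$ replaced by $\mathtt{B}^*_n$, respectively $\overline{\mathtt{B}_n}$, using the corresponding telescoping identities, the contraction estimates $\Vert \mathtt{B}^*_n(g)\Vert_{\infty}\le\Vert g\Vert_{\infty}$ and $\Vert \overline{\mathtt{B}_n}(g)\Vert_{\infty}\le\Vert g\Vert_{\infty}$ noted in this section, and Theorems~\ref{theorem4} and~\ref{theorem5}; in fact the same argument covers any mixed composition of the three operators, since each is a contraction on $C_B(\mathbb{R})$. I do not expect a genuine obstacle here; the only points needing care are (a) checking that a finite composition of norm-$\le 1$ operators is still norm-$\le 1$, so that no constants accumulate across the telescope, and (b) tracking the direction of the monotonicity in $k_p$ so that the dominant term is the one carrying the smallest index $k_1$.
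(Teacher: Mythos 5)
Your proposal is correct and follows essentially the same route as the paper: the paper's proof simply invokes Theorems~\ref{theorem3}, \ref{theorem4}, \ref{theorem5}, relying on the telescoping decomposition and the contraction bounds $\Vert \mathtt{B}_{n}(g)\Vert_{\infty}\le\Vert g\Vert_{\infty}$ (and their analogues) established in the remarks of that section, exactly as you do. Your additional care about the monotonicity in $k_p$ and the non-accumulation of constants across the telescope is sound and merely makes explicit what the paper leaves implicit.
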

\begin{proof}
The proof follows from Theorems  \ref{theorem3}, \ref{theorem4}, \ref{theorem5}.
\end{proof}
%\section{Concluding Remarks and Future Works }
%In this paper, we have introduced three neural network operators of  convolution type  activated by symmetrized, deformed and parametrized $ B $-generalized logistic  function in univariate case and we have obtained their approximation  properties both pointwise and uniform to the identity by using  modulus of continuity. Furthermore, we have considered their iterated versions.\\
%It is noteworthy to mention that these operators play important roles in neural network approximation since most of the basic network models are activated  by logistic functions. Nevertheless, neural network are commonly applied to multivariate case and therefore in the future we plan to study the multivariate case for our operators. Furthermore, summability theory gives the opportunity to overcome the lack of the ordinary convergence and we also plan to investigate the effect of summability theory in neural network approximation.
%\textbf{Data Availability Statement} 
%The authors confirm that there is no data available.
%\textbf{Authors Contribution Statement} 
%The authors jointly worked on the results. Also they read and approved the final manuscript.
%\textbf{Declaration of Competing Interests} 
%The authors declare that there are no competing interests.
%\newpage

\end{document}